\newtheorem{dummy}{Dummy}
\newtheorem{theorem}[dummy]{Theorem}
\newtheorem{proposition}[dummy]{Proposition}
\newtheorem{corollary}[dummy]{Corollary}
\theoremstyle{definition}
\newtheorem{definition}{Definition}
\newtheorem{remark}[dummy]{Remark}
\newcommand{\ignore}[1]{}
\date{29.4.2019}
\author{S. Pumpl\"un}
\email{susanne.pumpluen@nottingham.ac.uk}
\address{School of Mathematical Sciences\\
University of Nottingham\\
University Park\\
Nottingham NG7 2RD\\
United Kingdom
}
\keywords{Skew polynomial, Ore polynomial, Azumaya algebra,  cyclic algebra.}
\subjclass[2010]{Primary: 17A35; Secondary: 17A60, 16S36}
\begin{document}

\title[Generalized cyclic Azumaya algebras]
{The automorphisms of generalized cyclic Azumaya algebras}

\begin{abstract}
We define a nonassociative generalization of cyclic Azumaya algebras employing skew polynomial rings $D[t;\sigma]$, where
 $D$ is an Azumaya algebra of constant rank with center $C$ and $\sigma$ an automorphism of $D$, such that $\sigma|_{C}$ has finite order.
The automorphisms of these algebras are canonically induced by ring automorphisms of the skew polynomial ring $D[t;\sigma]$ used in their construction. We  achieve a description of their inner automorphisms.
Results on the automorphisms of classical Azumaya algebras and central simple algebras of this type are obtained as special cases.
\end{abstract}

\maketitle

%
\section*{Introduction}
%

Let $D$ be an Azumaya algebra of constant rank with center $C$. Let $\sigma\in {\rm Aut}(D)$ be a ring automorphism, such that $\sigma|_{C}$ has finite order $m$ and fixed ring $S_0={\rm Fix}(\sigma)\cap C$. We assume that $C/S_0$ is a cyclic Galois ring extension of degree $m$ with Galois group $\mathrm{Gal}(C/S_0) = \langle \sigma |_{C} \rangle$.
We define generalized cyclic Azumaya algebras with the help of the skew polynomial rings $D[t;\sigma]$ as quotient algebras $D[t;\sigma]/D[t;\sigma]f$ for some $f(t)=t^m-d$, where $m$ is the order of $\sigma|_{C}$ and $d\in S_0=C\cap {\rm Fix}(\sigma)$.
Generalized cyclic Azumaya algebras are special examples of crossed product algebras that are Azumaya algebras: the crossed product is taken using the Azumaya algebra $D$ of constant rank and the cyclic group $\langle \sigma\rangle$. In particular, this construction yields cyclic Azumaya algebras when employing a Galois ring extension $S/S_0$ with cyclic Galois group $G=\langle \sigma\rangle$ for an automorphism $\sigma$ of $S$ with fixed ring $S_0$ when choosing the skew polynomial ring for the construction.

 This approach allows us to fit generalized Azumaya algebras into a more general family of nonassociative algebras using a construction that goes back to Petit \cite{P66}: if we take $f(t)=t^m-d$, where $m$ is the order of $\sigma|_{C}$, but allow $d\in D^\times$, we still obtain an algebra over $S_0$ but for $d\not\in S_0$, this algebra is not associative. More precisely, the algebra
$S_f=D[t;\sigma,\delta]/D[t;\sigma,\delta]f$ is defined on the additive subgroup
$\{h\in D[t;\sigma,\delta]\,|\, {\rm deg}(h)<m \}$ of $D[t;\sigma,\delta]$ by using right division by $f$
to define the algebra multiplication $g\circ h=gh \,\,{\rm mod}_r f $.
The algebras $S_f$ were introduced and studied in detail by Petit in
\cite{P66, P68} when $D$ is a division ring, and more generally for arbitrary rings in \cite{P15}.

Note that associative generalized cyclic algebras over fields were investigated by Amitsur in \cite{Am2} and nonassociative generalized cyclic algebras over fields in \cite{BP18}.
Another generalization of associative generalized cyclic algebras over fields called \emph{associative cyclic extensions of  simple rings} was considered by Kishimoto \cite{K}.

After introducing the basic terminology in Section \ref{sec:prel}, we introduce nonassociative generalized cyclic Azumaya algebras and some of their properties in Section \ref{sec:nonass}. In Section \ref{sec:gencyclic}, we then prove that every automorphism of a nonassociative generalized Azumaya algebra $A$ of constant rank over $S_0$ is canonically induced by some ring homomorphism of $D[t;\sigma]$ (Theorem \ref{thm:aut2}).  Furthermore, the automorphisms $H_{id_D,k}$ that extend  $id_D$ are in one-one correspondence with the elements of the group $\{c\in C\,|\, N_{C/S_0}(c)=1\}$.
 These are the only automorphisms of $A$, unless some $\tau\not=id_D$ that commutes with $\sigma$ can be extended to an $S_0$-automorphism of $A$ as well. In particular, every automorphism of an associative generalized cyclic Azumaya algebra $A$ of constant rank extends an automorphism $\tau\in {\rm Aut}_{S_0}(D)$ that commutes with $\sigma$, and the possible extensions $H_{\tau,k}$ of an $S_0$-automorphism $\tau$ that commutes with $\sigma$ are in one-one correspondence with the group $\{c\in C\,|\, N_{C/S_0}(c)=1\}$. Moreover, the $S_0$-automorphisms $\tau$ of $D$ that commute with $\sigma$ form a subgroup of the automorphism group of the Azumaya algebra.

 In Section \ref{sec:inner}, we show that all automorphisms of a nonassociative generalized cyclic Azumaya algebra can be written as a composition of an inner automorphism and a map that is a canonical extension of $\tau$ to $A$ of the type $\sum_{i=0}^{m-1}a_it^i\mapsto \sum_{i=0}^{m-1}\tau(a_i)t^i$ (Theorem \ref{prop:inner}).
 We also prove that the automorphisms $H_{id_D,k}$ of a nonassociative generalized cyclic Azumaya algebra which extend the identity $id_D$ are inner for all $k\in C$, such that there is $c\in C^\times$ with $k=c^{-1}\sigma(c)$.
In the special case that we have a nonassociative generalized cyclic algebra $(D,\sigma,d)$ over a base field, all its automorphisms are of this last type.

As an immediate consequence of our results, the automorphisms of an Azumaya algebra $(D,\sigma,d)$, where the ring extension satisfies an analogue of Hilbert's Theorem 90, are the composition of an inner automorphism $G_c$, $c\in C^\times$ with the canonical extension $H_{\tau,1}$ of some $\tau \in {\rm Aut}_{S_0}(D)$ which commutes with $\sigma$ (Corollary \ref{thm:generalizedcycliccsa}). In particular, the automorphisms of a generalized cyclic central simple algebra  over a field are induced by ring automorphisms of the ring $D[t;\sigma]$ used in their construction; each is the composition of an inner automorphism $G_c$, $c\in C^\times$ with the canonical extension $H_{\tau,1}$ of some $\tau \in {\rm Aut}_{S_0}(D)$ which commutes with $\sigma$ (Corollary \ref{cor:generalizedcycliccsa}).
If $A=(K/F,\sigma,d)$ is an associative cyclic central simple algebra, we show that all its automorphism can be described as inner automorphisms of the type $ G_{ct^j}$ for some $c\in K$ with $N_{K/F}(c)=1$ and a suitable integer $j$, $0\leq j \leq m-1$.

%
%

\section{Preliminaries} \label{sec:prel}


\subsection{Nonassociative algebras} \label{subsec:nonassalgs}


Let $R$ be a unital commutative ring and let $A$ be an $R$-module. $A$ is an
\emph{algebra} over $R$ if there exists an $R$-bilinear map $A\times
A\to A$, $(x,y) \mapsto x \cdot y$, denoted simply by juxtaposition
$xy$, the  \emph{multiplication} of $A$. An algebra $A$ is called
\emph{unital} if there is an element in $A$, denoted by 1, such that
$1x=x1=x$ for all $x\in A$. We will only consider unital algebras.

The {\it associator} of $A$ is given by $[x, y, z] =
(xy) z - x (yz)$. The {\it left nucleus} of $A$ is defined as ${\rm
Nuc}_l(A) = \{ x \in A \, \vert \, [x, A, A]  = 0 \}$, the {\it
middle nucleus} of $A$ is ${\rm Nuc}_m(A) = \{ x \in A \, \vert \,
[A, x, A]  = 0 \}$ and  the {\it right nucleus} of $A$ is
${\rm Nuc}_r(A) = \{ x \in A \, \vert \, [A,A, x]  = 0 \}$. ${\rm Nuc}_l(A)$, ${\rm Nuc}_m(A)$, and ${\rm Nuc}_r(A)$ are associative
subalgebras of $A$. Their intersection
 ${\rm Nuc}(A) = \{ x \in A \, \vert \, [x, A, A] = [A, x, A] = [A,A, x] = 0 \}$ is the {\it nucleus} of $A$.
${\rm Nuc}(A)$ is an associative subalgebra of $A$ containing $F1$
and $x(yz) = (xy) z$ whenever one of the elements $x, y, z$ lies in
${\rm Nuc}(A)$.   The
 {\it center} of $A$ is defined as ${\rm C}(A)=\{x\in A\,|\, x\in \text{Nuc}(A) \text{ and }xy=yx \text{ for all }y\in A\}$.

For a subring $B$ of a unital ring $A$, the \emph{centralizer} (also called the  \emph{commutator subring} if $A$ is associative) of $B$ in $A$ is defined as
${\rm Cent}_A(B)=\{a\in A\,|\, ab=ba \text{ for all } b\in B\}$.  
If ${\rm Cent}_A(B)=B$ then ${\rm Cent}_A(B)$ is a maximal commutative nonassociative subring of $A$.

An automorphism $G\in {\rm Aut}(A)$ of an algebra $A$ is called an \emph{inner automorphism}
if there is an element $m\in A$ with left inverse $m_l$ such that $G(x) = (m_lx)m$ for all $x\in A$. We denote such an automorphism by $G_m$. Given an inner automorphism
$G_m\in {\rm Aut}(A)$ and some $H \in {\rm Aut}(A)$, $H^{-1}\circ G_m \circ H\in {\rm Aut}(A)$ is also an inner automorphism.
Indeed, \cite[Lemma 2, Theorem 3, 4]{W09} generalize to any nonassociative algebra over a ring:

\begin{proposition} \label{prop:inner_Wene}
Let $A$ be an algebra over $R$.
\\ (i) For all invertible $n\in {\rm Nuc}(A)$, $G_n(x)=(n^{-1}x)n$ is an inner automorphism of $A$.
\\ (ii) If $G_m$ is an inner automorphism of $A$, then so is $G_{nm}(x) = ((m_l n^{-1})x)(nm)$ for all
invertible  $n\in {\rm Nuc}(A)$.
\\ (iii) If $G_m$ is an inner automorphism of $A$, and $a,b\in {\rm Nuc}(A)$ are invertible, then
$G_{am}=G_{bm}$ if and only if $ ab^{-1}\in C(A).$
\\ (iv) For invertible $n,m\in {\rm Nuc}(A)$, $G_m=G_n$ if and only if $n^{-1}m\in C(A)$.
\end{proposition}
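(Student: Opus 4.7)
The plan rests on two preliminary observations that I would establish first: (a) if $n \in \mathrm{Nuc}(A)$ is invertible in $A$, then $n^{-1} \in \mathrm{Nuc}(A)$; and (b) products of nuclear elements are nuclear. For (a), I apply the defining associator identity of each of the three nuclei to the triple containing $n^{-1}$, reassociate using the nuclearity of $n$, and cancel by invertibility. For (b), expand $[nm, x, y]$ (and its two cyclic analogues) directly and use associativity through $n$ and $m$ in turn. Together these legitimize every reparenthesization that follows, since any three-term product containing at least one nuclear factor associates freely.

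For part (i), $G_n$ is $R$-linear, sends $1$ to $1$, and has $G_{n^{-1}}$ as two-sided inverse. Multiplicativity reduces, after repeated nuclear reassociations by $n$ and $n^{-1}$ in the expression $((n^{-1}x)n)((n^{-1}y)n)$, to cancellation of the internal $nn^{-1}$, leaving $n^{-1}(xy)n = G_n(xy)$. For part (ii), the key identity is $G_{nm} = G_m \circ G_n$. Starting from $((m_l n^{-1})x)(nm)$, middle-nucleus of $n^{-1}$ replaces $(m_l n^{-1})x$ by $m_l(n^{-1}x)$; right-nucleus of $n$ rewrites $m_l(n^{-1}x)\cdot(nm)$ as $((m_l(n^{-1}x))n)m$; and a further reassociation converts $(m_l(n^{-1}x))n$ into $m_l((n^{-1}x)n) = m_l G_n(x)$. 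Hence the formula collapses to $(m_l G_n(x))m = G_m(G_n(x))$, an automorphism as a composite of two. A direct check gives $(m_l n^{-1})(nm)=1$, so $m_l n^{-1}$ is a left inverse of $nm$ and $G_{nm}$ has the required inner form.

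For part (iii), I use (ii) to rewrite $G_{am} = G_m \circ G_a$ and $G_{bm} = G_m \circ G_b$; since $G_m$ is bijective, $G_{am} = G_{bm}$ holds iff $G_a = G_b$. The equation $a^{-1}xa = b^{-1}xb$, after multiplying on the left by $b$ and on the right by $a^{-1}$ (all reassociations permitted by nuclearity of $a,b,a^{-1},b^{-1}$), becomes $cx = xc$ for all $x \in A$ with $c = ba^{-1}$, i.e.\ $c \in C(A)$; this is equivalent to $ab^{-1} \in C(A)$ since $C(A)$ is closed under inversion. Part (iv) is the special case $m = m_l = 1$ of (iii) (noting that $mn^{-1} \in C(A)$ forces $mn^{-1} = n^{-1}m$, so the two forms of the condition agree). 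I expect the only delicate point to be the bookkeeping of parentheses: each three-term product must be certified by exhibiting a nuclear factor before any reassociation, and one must also confirm at every step that the new parenthesization is amenable to the next move.
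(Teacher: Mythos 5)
Your proof is correct. Note that the paper itself supplies no argument for this proposition: it simply asserts that \cite[Lemma 2, Theorem 3, 4]{W09}, which are stated for semifields, ``generalize to any nonassociative algebra over a ring.'' Your write-up effectively supplies the details of that generalization, and it does so in the right way: the two preliminary facts (that the inverse of an invertible nuclear element is again nuclear, and that the nucleus is multiplicatively closed --- the latter already implicit in the paper's remark that ${\rm Nuc}(A)$ is an associative subalgebra) are exactly what is needed to license every reparenthesization, and your key identity $G_{nm}=G_m\circ G_n$ is the efficient route to (ii), reducing (iii) to the case $G_a=G_b$ and (iv) to (iii) with $m=1$. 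Two small points worth making explicit when you write this out in full: first, in (ii) the element $m$ and its left inverse $m_l$ need not be nuclear, so you should check (as you implicitly do) that each reassociation is certified by $n$ or $n^{-1}$ alone; second, in passing between $mn^{-1}\in C(A)$ and $n^{-1}m\in C(A)$ in (iv), the observation that a central invertible nuclear element has a central inverse, and that centrality of $mn^{-1}$ forces $mn=nm$, closes the loop. With those spelled out, the argument is complete.
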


The set $\{G_m\,|\, m\in {\rm Nuc}(A) \text{ is invertible} \}$ of inner automorphisms is a
subgroup of ${\rm Aut}_R(A)$. For each invertible $m\in {\rm Nuc}(A)\setminus C(A)$, $G_m$ generates a cyclic subgroup
of inner automorphisms which has finite order $s$ if $m^s\in C(A)$, so in particular if $m$ has order $s$.

If the nucleus of $A$ is commutative, then for all invertible $n\in
{\rm Nuc}(A)$, $G_n(x)=(n^{-1}x)n$ is an inner automorphism of $A$
such that ${G_n}|_{{\rm Nuc}(A)}=id_{{\rm Nuc}(A)}$.

\subsection{Azumaya algebras}

An algebra
$A$ is called an \emph{Azumaya} algebra over a unital ring $R$, if  $A$ is finitely generated as an $R$-module, $A$ is a separable extension of $R$, and the center of $A$ is $R$.
Equivalently, $A$ is an Azumaya algebra if $A$ is a finitely generated $R$-module and $A/Am$ a central simple $R/m$-algebra, for all maximal ideals $m$ in $R$.

A commutative ring extension $R'$ of $R$ is called a \emph{splitting ring} of $A$ if $A\otimes_R R'\cong {\rm End}_{R'}(P)$
 for a suitable faithfully projective $R'$-module $P$.

 If $S$ is a maximal commutative subalgebra in $A$ and $S$ is separable over $R$ then $S$ is a splitting ring for $A$ \cite{Kn}.

\subsection{Galois extensions}

Let $S$ be  a commutative ring. Recall that two $S_0$-algebra homomorphisms $\sigma$ and $\tau$ are called \emph{strongly distinct},
if for every non-zero idempotent $e\in S$ there is $x\in S$ such that $\sigma(x)e\not=\tau(x)e$.

 If we assume that $S$ is an $S_0$-algebra and faithfully projective as an $S_0$-module,
and that $G$ is a group of $S_0$-algebra homomorphisms of $S$, then the following conditions are equivalent:
\\ (i) $S$ is a separable $S_0$-algebra, finitely generated projective as an $S_0$-module, and ${\rm rank}_{S_0}S=|G|$.
All elements of $G$ are pairwise strongly distinct.
\\ (ii) $S$ is faithfully projective as an $S_0$-module, ${\rm rank}_{S_0}S=|G|$ and for every $\sigma\in G$
there exist $x_{i,\sigma},y_{i,\sigma}\in S$ such that $\sum_{i=1}^{m_\sigma}x_{i,\sigma}\tau(y_{i,\sigma})=
\delta_{\tau,\sigma}$ for all $\tau\in G$.
\\ (iii) $\varphi:S\otimes S\longrightarrow S^n,$ $\varphi(s\otimes t)=(s\tau(t))_{\tau\in G}$ is an isomorphism of $S$-algebras where $n$ is the order of $G$.

If one of these conditions is satisfied we call $S/S_0$ a \emph{Galois extension} with \emph{Galois group $G$}
in the sense of Chase-Harrison-Rosenberg \cite{CHR}.

Moreover, $S/S_0$ is called a \emph{weakly Galois extension} (resp., \emph{(G-)Galois} in Szeto's papers \cite{SzX, Sz, Sz12, Sz1}) if
 $S$ is a separable algebra over $S_0$, finitely generated projective as $S_0$-module, and
 there is a finite group of automorphisms $G$ of $S$ such that
  $S_0=S^G$ is the fixed ring of $G$ in $S$. Note that if $S/S_0$ is a Galois extension with Galois group $G$ then this implies that $S^G=S_0$.
  If $S/S_0$ is a Galois extension and the elements in $G$ are $S_0$-automorphisms of $S$ then $S$ is a weakly Galois extension of $S_0$ of constant rank (cf. \cite[Lemma 2.3, Corollary 2.4]{FP} for this summary).

  If $S/S_0$ is a Galois extension with Galois group $G$, then the map $H\mapsto S^H$ gives a one-one correspondence
  between the set of subgroups of $G$ and the set of $S_0$-subalgebras $S_H$ of $S$ which are separable and \emph{$G$-strong},
  i.e. the restriction of any two elements of $G$ to $S_H$ are either equal or strongly distinct as maps from $S_H$ to $S_H$.

  In this paper, we will use commutative Galois extensions that are weakly commutative Galois extensions.

\subsection{Skew polynomial rings}

Let $S$ be a unital associative ring, $\sigma$ a ring endomorphism of
$S$ and $\delta:S\rightarrow S$ a \emph{left $\sigma$-derivation},
i.e. an additive map such that
$\delta(ab)=\sigma(a)\delta(b)+\delta(a)b$
for all $a,b\in S$.
Then the \emph{skew polynomial ring} $S[t;\sigma,\delta]$ is the
set of skew polynomials $g(t)=a_0+a_1t+\dots +a_nt^n$ with $a_i\in
S$, with term-wise addition and where the multiplication is defined
via $ta=\sigma(a)t+\delta(a)$ for all $a\in S$. That means,
$$at^nbt^m=\sum_{j=0}^n a(\Delta_{n,j}\,b)t^{m+j}$$ for all $a,b\in
S$, where the map $\Delta_{n,j}$ is defined recursively via
$$\Delta_{n,j}=\delta(\Delta_{n-1,j})+\sigma (\Delta_{n-1,j-1}),$$ with
$\Delta_{0,0}=id_S$, $\Delta_{1,0}=\delta$, $\Delta_{1,1}=\sigma $.
Therefore $\Delta_{n,j}$ is the sum of all polynomials in $\sigma$
and $\delta$ of degree $j$ in $\sigma$ and degree $n-j$ in $\delta$
\cite[p.~2]{J96}. If $\delta=0$, then $\Delta_{n,n}=\sigma^n$.

For $\sigma=id$ and $\delta=0$, we obtain the usual ring of left
polynomials $S[t]=S[t;id,0]$. Define ${\rm Fix}(\sigma)=\{a\in S\,|\,
\sigma(a)=a\}$ and ${\rm Const}(\delta)=\{a\in S\,|\, \delta(a)=0\}$.

 For $f(t)=a_0+a_1t+\dots +a_nt^n\in S[t;\sigma,\delta]$ with $a_n\not=0$ define ${\rm deg}(f)=n$ and ${\rm deg}(0)=-\infty$.
Then ${\rm deg}(gh)\leq{\rm deg} (g)+{\rm deg}(h)$ (with equality if
$h$ has an
 invertible leading coefficient, or $g$ has an
 invertible leading coefficient and $\sigma$ is injective, or if $S$ is a division ring).
 An element $f\in S[t;\sigma,\delta]$ is \emph{irreducible} if it is not a unit and  it has no proper factors, i.e if there do not exist $g,h\in R$ with
 ${\rm deg}(g),{\rm deg} (h)<{\rm deg}(f)$ such
 that $f=gh$.


\subsection{Algebras obtained from skew polynomial rings}


Let $S$ be a unital ring and $R=S[t;\sigma,\delta]$ a skew polynomial
ring where $\sigma$ is injective.

Assume $f(t)=\sum_{i=0}^{m}a_it^i \in S[t;\sigma,\delta]$
 has an invertible leading coefficient $a_m\in S^\times$. Then
for all $g(t)\in S[t;\sigma,\delta]$ of degree $l\geq m$,  there exist  uniquely
determined $r(t),q(t)\in S[t;\sigma,\delta]$ with
 ${\rm deg}(r)<{\rm deg}(f)$, such that
$g(t)=q(t)f(t)+r(t)$ (\cite{CB},\cite[Proposition 1]{P15}).

Let ${\rm mod}_r f$ denote the remainder of right division by $f$.
The skew polynomials of degree less that $m$ canonically represent the elements of the right
$S[t;\sigma,\delta]$-module $S[t;\sigma,\delta]/S[t;\sigma,\delta]f$.
Moreover, $$\{g\in S[t;\sigma,\delta]\,|\, {\rm deg}(g)<m\}$$
 together with the multiplication
   \[g\circ h=
  \begin{cases}
  gh  \text{ if } {\rm deg} (g)+{\rm deg} (h) < m,\\
  gh \,\,{\rm mod}_r f \text{ if } {\rm deg} (g)+{\rm deg} (h) \geq m,
   \end{cases}
  \]
is a unital nonassociative ring $S_f$ also denoted by
$S[t;\sigma,\delta]/S[t;\sigma,\delta]f$. $S_f$ is a unital nonassociative algebra over the
commutative subring
$$\{a\in S\,|\, ah=ha \text{ for all } h\in S_f\}={\rm Comm}(S_f)\cap S$$
 of $S$.
 When the context is clear, we will drop the $\circ$ notation and simply use juxtaposition for multiplication in $S_f$.
Note that if $f$ has degree 1 then $S_f\cong S$, and if $f$ is reducible then $S_f$ contains zero divisors.
For all invertible $a\in S$ we have $S_f = S_{af}$.

 In the following, we assume $m\geq 2$ and call the algebras $S_f$  \emph{Petit algebras}
as the construction goes back to Petit \cite{P66}.

 $S_f$ is a free left $S$-module  of rank $m$ with basis $t^0=1,t,\dots,t^{m-1}$.
 $S_f$ is associative if and only if $Rf$ is a two-sided ideal in $R$.
 If $S_f$ is not associative then
$S\subset{\rm Nuc}_l(S_f),\,\,S\subset{\rm Nuc}_m(S_f)$ and $$\{g\in
R\,|\, {\rm deg}(g)<m \text{ and }fg\in Rf\}= {\rm Nuc}_r(S_f).$$
When $S$ is a division ring, these inclusions become equalities \cite[Theorem 4]{P15}.

 Note that $C(S_f)={\rm Comm}(S_f)\cap {\rm Nuc}_l(S_f)\cap {\rm Nuc}_m(S_f)\cap {\rm Nuc}_r(S_f)$ and
so
$$\{a\in S\,|\, ah=ha \text{ for all } h\in S_f\}={\rm Comm}(S_f)\cap S\subset C(S_f).$$
If ${\rm Nuc}_l(S_f)= {\rm Nuc}_m(S_f)=S$ this yields that the center
$C(S_f)={\rm Comm}(S_f)\cap S\cap {\rm Nuc}_r(S_f)={\rm Comm}(S_f)\cap S$ of $S_f$  is identical to the ring
$\{a\in S\,|\, ah=ha \text{ for all } h\in S_f\}.$
 Also note that
$$C(S)\cap{\rm Fix}(\sigma)\cap {\rm Const}(\delta)\subset \{a\in S\,|\, ah=ha \text{ for all } h\in S_f\}$$
which is proved analogously as \cite[Theorem 8 (ii)]{P15}, where $\delta=0$.

If $\delta=0$ and $f(t)=\sum_{i=0}^{m}a_it^i \in S[t;\sigma]$ then
 ${\rm Comm}(S_f)=\{g\in S_f\,|\, gh=hg \text{ for all } h\in S_f\}$ contains the set
 $$\{\sum_{i=0}^{m-1}d_it^i \,|\, d_i\in {\rm Fix}(\sigma)\text{ and } cd_i=d_i\sigma^i(c)\text{ for all } c\in S \}$$
 \cite[Theorem 8 (i)]{P15}.
If $a_0$ is invertible, the two sets are equal \cite[Proposition 7.4 (iii)]{CB}, implying
$$\{a\in S\,|\, ah=ha \text{ for all } h\in S_f\}=\{a\in S \,|\, a\in {\rm Fix}(\sigma)\text{ and } ca=ac\text{ for all } c\in S \}
=C(S)\cap{\rm Fix}(\sigma).$$

%
%

\section{Nonassociative generalized cyclic Azumaya algebras} \label{sec:nonass}

In the following, let $D$ be a unital ring with center $C=C(D)$.
Let $\sigma\in {\rm Aut}(D)$ be a ring automorphism, such that $\sigma|_{C}$ has finite order $m$ and fixed ring $S_0={\rm Fix}(\sigma)\cap C$. Moreover, we assume that $C/S_0$ is a cyclic Galois ring extension of degree $m$ with Galois group $\mathrm{Gal}(C/S_0) = \langle \sigma |_{C} \rangle$. This implies that $C$ has constant rank $m$ as an $S_0$-module.

Let  $f(t)=t^m-d\in D[t;\sigma]$ with $d\in D^\times$ invertible.  Then $S_f=D[t;\sigma]/D[t;\sigma](t^m-d)$ is a nonassociative algebra over its center
$$\{a\in D\,|\, ah=ha \text{ for all } h\in S_f\}=C\cap{\rm Fix}(\sigma)=S_0.$$
Moreover, since $d$ is invertible we know that
$${\rm Comm}(S_f)=\{\sum_{i=0}^{m-1}d_it^i \,|\, d_i\in {\rm Fix}(\sigma)\text{ and } cd_i=d_i\sigma^i(c)\text{ for all } c\in S\}.$$

 In the associative setting, i.e. when $d\in S_0^\times$, we  have the following result:

\begin{theorem} \label{thm:mainAzumaya}
Let $S_f=D[t;\sigma]/D[t;\sigma](t^m-d)$ with $d\in S_0^\times$. Then the following statements are equivalent:
 \\ (i) $D[t;\sigma]/D[t;\sigma](t^m-d)$ is an Azumaya algebra over $S_0$.
 \\ (ii) $D$ is an Azumaya algebra over $C$.
\end{theorem}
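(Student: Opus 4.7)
The strategy is to identify $S_f$, for $d\in S_0^\times$, with the tensor product $A_0\otimes_{S_0}D^\sigma$ of two Azumaya $S_0$-algebras: the associative cyclic algebra $A_0=C[t;\sigma|_C]/(t^m-d)$ built from the Galois extension $C/S_0$ and the invertible scalar $d\in S_0^\times$, and the $\sigma$-fixed subring $D^\sigma=\mathrm{Fix}(\sigma)\cap D$. With this identification in hand, both implications will follow from standard stability properties: tensor products of Azumaya $S_0$-algebras are Azumaya, and Galois descent preserves the Azumaya property.

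First I would recall that $A_0$ is always Azumaya over $S_0$, as this is the classical cyclic $S_0$-algebra attached to the Galois extension $C/S_0$ and the unit $d\in S_0^\times$. Next I would compute $\mathrm{Cent}_{S_f}(A_0)$: an element $\sum_i a_it^i\in S_f$ commutes with every $c\in C$ iff $a_i(\sigma^i(c)-c)=0$ for $1\le i\le m-1$, and the Chase--Harrison--Rosenberg Galois identity $\sum_j x_j(y_j-\sigma^i(y_j))=1$ (applied to $\tau=\sigma^i\ne\mathrm{id}$) forces $a_i=0$ in that range, so the centralizer of $C$ in $S_f$ is $D$; further requiring commutation with $t$ cuts down to $D^\sigma$, giving $\mathrm{Cent}_{S_f}(A_0)=D^\sigma$. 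In particular $A_0$ and $D^\sigma$ commute elementwise in $S_f$, so multiplication defines an $S_0$-algebra homomorphism $A_0\otimes_{S_0}D^\sigma\to S_f$. Galois descent for the semilinear $G$-action of $\sigma$ on the $C$-module $D$ yields $D\cong D^\sigma\otimes_{S_0}C$, which simultaneously shows the multiplication map is surjective (it covers each graded piece $Dt^i=CD^\sigma t^i$ since $D=C\cdot D^\sigma$) and matches $S_0$-ranks; thus the map is a surjection between finitely generated projective $S_0$-modules of the same constant rank, hence an isomorphism.

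With $S_f\cong A_0\otimes_{S_0}D^\sigma$ established unconditionally, the equivalence becomes formal. For (ii) $\Rightarrow$ (i), Galois descent transports Azumaya-ness from $D/C$ to $D^\sigma/S_0$, and then $S_f$ is a tensor product over $S_0$ of two Azumaya $S_0$-algebras, hence Azumaya over $S_0$. For (i) $\Rightarrow$ (ii), the double centralizer theorem for Azumaya algebras, applied to the Azumaya subalgebra $A_0\subset S_f$, forces $\mathrm{Cent}_{S_f}(A_0)=D^\sigma$ to be Azumaya over $S_0$; base change to $C$ then gives $D=D^\sigma\otimes_{S_0}C$ Azumaya over $C$.

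The main obstacle I expect is the Galois descent step $D\cong D^\sigma\otimes_{S_0}C$: one must invoke descent in the Chase--Harrison--Rosenberg commutative-ring setting rather than classical field-theoretic descent, to handle the fact that $D$ is only assumed Azumaya over $C$ of constant rank and that $\sigma\in\mathrm{Aut}(D)$ acts semilinearly (not $C$-linearly) over the Galois extension $C/S_0$. Verifying this carefully is what makes the rest of the argument purely formal.
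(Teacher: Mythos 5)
Your route is genuinely different from the paper's: you want to exhibit $S_f$ as $A_0\otimes_{S_0}D^\sigma$ with $A_0=C[t;\sigma|_C]/(t^m-d)$ and $D^\sigma=D\cap\mathrm{Fix}(\sigma)$, and then invoke stability of the Azumaya property under tensor products for one implication and the double centralizer theorem for the other. The paper instead runs the chain $S_0\subset C\subset D\subset S_f$ through transitivity of separability (with an explicit separability idempotent for $S_f$ over $D$) to get (ii)$\Rightarrow$(i), and applies the commutator theorem to the commutative separable subalgebra $C$, whose centralizer in $S_f$ is $D$ by Proposition \ref{prop:cent}, to get (i)$\Rightarrow$(ii). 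Your computation $\mathrm{Cent}_{S_f}(A_0)=D^\sigma$ is correct, and if the decomposition were available both implications would indeed become formal.

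The gap is precisely the step you yourself flag as the main obstacle and leave unverified: the descent isomorphism $D\cong D^\sigma\otimes_{S_0}C$, equivalently $D=C\cdot D^\sigma$, which you need for surjectivity of the multiplication map $A_0\otimes_{S_0}D^\sigma\to S_f$. Galois descent along $C/S_0$ in the Chase--Harrison--Rosenberg setting requires a genuine semilinear action of the Galois group $\mathbb{Z}/m\mathbb{Z}$ on $D$, i.e. $\sigma^m=\mathrm{id}_D$; the hypotheses only give that $\sigma|_C$ has order $m$, and $\sigma^m$ may be a nontrivial $C$-algebra automorphism of $D$. For instance, take $C/S_0$ quadratic with conjugation $x\mapsto\bar x$, $D=M_2(C)$, and $\sigma(X)=u\bar Xu^{-1}$ with $u\bar u$ noncentral: then $\sigma^2=\mathrm{Int}(u\bar u)\neq\mathrm{id}_D$, so $D^\sigma\subseteq\mathrm{Fix}(\sigma^2)=\mathrm{Cent}_D(u\bar u)$, hence $C\cdot D^\sigma\subseteq\mathrm{Cent}_D(u\bar u)\subsetneq D$ and the decomposition fails. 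So the identification is certainly not ``unconditional.'' You can rescue the argument by noting that for $d\in S_0^\times$ the left ideal $D[t;\sigma](t^m-d)$ is two-sided, i.e. $S_f$ is associative, exactly when $d(\sigma^m(b)-b)=0$ for all $b\in D$, so associativity forces $\sigma^m=\mathrm{id}_D$ and the descent then goes through --- but you must state this extra hypothesis and justify it, since in the direction (ii)$\Rightarrow$(i) you cannot presuppose that $S_f$ is Azumaya, and without $\sigma^m=\mathrm{id}_D$ there is no group action to descend along. The paper's proof never needs $D=C\cdot D^\sigma$ and so sidesteps this issue entirely.
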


\begin{proof}
(ii) implies (i):  Since $D$ is an Azumaya algebra over $C$, $D$ is separable over $C$. Moreover, $S_f$ is a separable extension of $D$, since
$$x=m^{-1}d^{-1}\sum_{i=1}^{m-1}t^i\otimes t^{m-1-i}$$
 is a separable idempotent of $S_f$ over $D$: we have $m^{-1}d^{-1}\sum_{i=1}^{m-1}t^i\otimes_D t^{m-1-i}=1$ and
 $gx=xg$ for all $g\in S_f$. Thus $S_f$ is a separable extension of $S_0$
by the transitivity of separable extensions. By our general theory on Petit algebras, we know that
$S_f$ is an algebra with center $S_0$.  By construction, $S_f$ is finitely generated as an $S_0$-module, since $D$ is finitely generated over $C$. Therefore $S_f$ is an Azumaya algebra over $S_0$.
\\ (i) implies (ii): Assume that $D[t;\sigma]/D[t;\sigma](t^m-d)$ is an Azumaya algebra. Then its center is $S_0=C\cap {\rm Fix}(\sigma)$. By Proposition \ref{prop:cent} below, the centralizer  ${\rm Cent}_{(D,\sigma, d)}(C)$ of $C$  in $(D,\sigma, d)$ is $D$. Thus $D$ is a separable $S_0$-algebra by the Commutator Theorem for Azumaya algebras \cite[Theorem 4.3]{ISz}, since $C$ is a separable $S_0$-algebra. Therefore $D$ is an Azumaya algebra.
\end{proof}

Assuming additionally that $\sigma$ has finite order $m$, we obtain:

\begin{theorem} \label{thm:mainAzumayaII}
Let $S_f=D[t;\sigma]/D[t;\sigma](t^m-d)$ with $d\in S_0^\times$ and let $\sigma$ have finite order $m$. Then the following statements are equivalent:
 \\ (i) $D[t;\sigma]/D[t;\sigma](t^m-d)$ is an Azumaya algebra over $S_0$.
 \\ (ii) Let $G_t(g)=tgt^{-1}$ be the inner automorphism of $S_f$ defined by $t$. Then $\{ u\in S_f \,|\, G_t(u)=u \}={\rm Fix}(\sigma)[t]/{\rm Fix}(\sigma)[t](t^m-d)$ is an Azumaya algebra
 with center $S_0[t]/S_0[t](t^m-d)$.
\end{theorem}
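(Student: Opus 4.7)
The plan is to view $S_f^{G_t}$ as the centralizer ${\rm Cent}_{S_f}(E)$ of the commutative subring $E := S_0[t]/S_0[t](t^m-d)$ in $S_f$, and to apply the Commutator Theorem for Azumaya algebras in both directions, combined with Theorem \ref{thm:mainAzumaya}. The preliminary computations are straightforward: since $d \in S_0^\times$, the element $t \in S_f$ is a unit (with $t^{-1} = d^{-1}t^{m-1}$), so $G_t$ is defined; because $G_t|_D = \sigma$ and $G_t(t)=t$, an element $\sum_{i=0}^{m-1}a_it^i$ lies in $S_f^{G_t}$ iff every $a_i \in {\rm Fix}(\sigma)$, giving the claimed equality $S_f^{G_t} = {\rm Fix}(\sigma)[t]/{\rm Fix}(\sigma)[t](t^m-d)$. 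A parallel computation yields ${\rm Cent}_{S_f}(E) = S_f^{G_t}$ (commuting with $t$ forces coefficients into ${\rm Fix}(\sigma)$; commuting with $S_0$ is automatic, as $S_0 \subseteq Z(S_f)$), and using $\sigma^i|_{{\rm Fix}(\sigma)} = {\rm id}$ one sees that the centre of $S_f^{G_t}$ equals $Z({\rm Fix}(\sigma))[t]/(t^m-d)$.

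For (i) $\Rightarrow$ (ii) I would first invoke Theorem \ref{thm:mainAzumaya} to conclude that $D$ is Azumaya over $C$. Inside $S_f$ the subring generated by $C$ and $t$ is the classical associative cyclic crossed product $(C/S_0,\sigma|_C,d) \cong C[t;\sigma]/C[t;\sigma](t^m-d)$, which is Azumaya over $S_0$, and inside which $E$ is a maximal $S_0$-subalgebra that is $G$-strong in the sense of Chase-Harrison-Rosenberg; this forces $E$ to be a separable $S_0$-subalgebra of $S_f$. The Commutator Theorem for Azumaya algebras then gives that $S_f^{G_t} = {\rm Cent}_{S_f}(E)$ is Azumaya over its own centre. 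It remains to identify that centre with $E$, which amounts to showing $Z({\rm Fix}(\sigma)) = S_0$; this is a standard consequence of $D$ being Azumaya over $C$ together with $\sigma$ having order $m$ on $D$, via Galois descent of Azumaya algebras yielding ${\rm Fix}(\sigma) \otimes_{S_0} C \cong D$.

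For (ii) $\Rightarrow$ (i), by Theorem \ref{thm:mainAzumaya} it suffices to prove that $D$ is Azumaya over $C$. Starting from $S_f^{G_t}$ Azumaya over $E$ with centre $E$, I would apply a reverse double-centralizer step: the centralizer of $E$ inside $S_f$ is $S_f^{G_t}$ by the preliminary computation, and extending scalars along the Galois map $E \hookrightarrow C[t;\sigma]/C[t;\sigma](t^m-d)$, identified with the invariants under the $G_t$-action on the cyclic crossed product, transports the Azumaya property back to $D$ over $C$. The main obstacle is precisely this reverse descent: reconstructing Azumaya-ness of $S_f$ over $S_0$ (equivalently, of $D$ over $C$) from Azumaya-ness of $S_f^{G_t}$ over $E$ requires a non-commutative Galois descent that is not purely formal. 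The cleanest route is to localize at each maximal ideal $\mathfrak{m}$ of $S_0$, so that one lands in the field case and the equivalence becomes the classical one for cyclic central simple algebras and their ring of invariants, and then to reassemble globally using the local-to-global nature of the Azumaya property.
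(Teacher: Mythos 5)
Your preliminary computations are correct, but they amount to essentially everything the paper itself proves for this theorem: the paper only establishes the identification $\{u\in S_f\,|\,G_t(u)=u\}={\rm Fix}(\sigma)[t]/{\rm Fix}(\sigma)[t](t^m-d)$ (by the same coefficientwise observation $G_t(\sum a_it^i)=\sum\sigma(a_i)t^i$ that you make, stated there only half-explicitly) and refers the entire equivalence (i) $\Leftrightarrow$ (ii) to \cite[Theorem 3.3]{SzX}. Your attempt to prove that equivalence directly has two genuine gaps. First, in (i) $\Rightarrow$ (ii) the Commutator Theorem \cite[Theorem 4.3]{ISz} can only be applied to ${\rm Cent}_{S_f}(E)$ once you know that $E=S_0[t]/S_0[t](t^m-d)$ is a \emph{separable} $S_0$-algebra. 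Your justification --- that $E$ is a maximal $G$-strong subalgebra in the sense of Chase--Harrison--Rosenberg --- misapplies that correspondence, which classifies separable subalgebras of the commutative Galois extension $C/S_0$; the ring $E$ is not a subring of $C$. In fact $S_0[t]/S_0[t](t^m-d)$ is separable over $S_0$ only when $m$ is invertible in $S_0$ (for $S_0$ of characteristic $p$, $m=p$ and $d=1$ one gets $S_0[t]/((t-1)^p)$, which is not separable), and invertibility of $m$ is not a hypothesis of the theorem; compare the Remark in Section \ref{sec:nonass}, whose part (iv) imposes exactly this assumption before asserting separability of $E$.

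Second, and more seriously, the direction (ii) $\Rightarrow$ (i) is not proved. You correctly single out the reverse descent --- from Azumaya-ness of $S_f^{G_t}$ over $E$ back to Azumaya-ness of $D$ over $C$ --- as ``the main obstacle,'' but what follows is only a gesture: you do not verify that formation of the $G_t$-fixed ring commutes with reduction modulo a maximal ideal $\mathfrak{m}$ of $S_0$, nor that Azumaya-ness of the reduced fixed ring forces $D/\mathfrak{m}D$ to be central simple over its centre, and the ``classical equivalence for cyclic central simple algebras and their ring of invariants'' you appeal to in the field case is not a standard citable statement. This direction carries the real content of the theorem and is precisely what \cite[Theorem 3.3]{SzX} supplies. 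Incidentally, for (i) $\Rightarrow$ (ii) there is a cleaner route than the Commutator Theorem, which you half-invoke at the end of that paragraph: since $\sigma^m={\rm id}_D$ is assumed here, Galois descent along $S_0\to C$ gives that ${\rm Fix}(\sigma)$ is Azumaya over $S_0$ with $D\cong{\rm Fix}(\sigma)\otimes_{S_0}C$, and then ${\rm Fix}(\sigma)[t]/{\rm Fix}(\sigma)[t](t^m-d)\cong{\rm Fix}(\sigma)\otimes_{S_0}E$ is Azumaya over $E$ by base change, with no separability of $E$ over $S_0$ required.
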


\begin{proof} We only need to show that $\{ u\in S_f \,|\, G_t(u)=u \}={\rm Fix}(\sigma)[t]/{\rm Fix}(\sigma)[t](t^m-d)$. The rest is proved in \cite[Theorem 3.3]{SzX}.
For this we note that both ${\rm Fix}(\sigma)\subset \{ u\in S_f \,|\, G_t(u)=u \}$ and $t,t^2,\dots,t^{m-1} \in \{ u\in S_f \,|\, G_t(u)=u \}$. Moreover, we have $t^m=d\in \{ u\in S_f \,|\, G_t(u)=u \}$, so that  $\{ u\in S_f \,|\, u(t)\in {\rm Fix}(\sigma)[t] \}\subset {\rm Fix}(\sigma)[t]$. This proves the assertion.
\end{proof}

More generally, we still have:

\begin{theorem} \label{thm:mainAzumayaIII}
Let $D$ be an Azumaya algebra over $C$ and let $\sigma|_C$ have finite order $m$.
Consider the Azumaya algebra $S_f=D[t;\sigma]/D[t;\sigma](t^m-d)$, $d\in S_0^\times$.  Let  $G_t(g)=tgt^{-1}$ be the inner automorphism of $S_f$ defined by $t$. Then:
\\ (i) $D[t;\sigma]/D[t;\sigma](t^m-d)$ is a $G_t$-Galois extension of $S_f^{G_t}=\{ u\in S_f \,|\, G_t(u)=u \}$, such that $S_f^{G_t}$ is a direct summand of $D[t;\sigma]/D[t;\sigma](t^m-d)$ as a bimodule over $S_f^{G_t}$.
\\ (ii)  $\{ u\in S_f \,|\, G_t(u)=u \}={\rm Fix}(\sigma)[t]/{\rm Fix}(\sigma)[t](t^m-d)$ is an Azumaya algebra
 with center $S_0[t]/S_0[t](t^m-d)$.
\end{theorem}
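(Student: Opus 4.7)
The plan is to mirror the proof of Theorem \ref{thm:mainAzumayaII} as closely as possible: identify the fixed ring $S_f^{G_t}$ by a direct calculation using the defining relation of $D[t;\sigma]$, and then read off both (i) and (ii) from the general Galois theory of Azumaya algebras developed by Szeto in \cite[Theorem 3.3]{SzX}.

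For the identification step I would fix an element $g = \sum_{i=0}^{m-1} a_i t^i \in S_f$ with $a_i \in D$ and compute $G_t(g) = tgt^{-1}$, where the inverse $t^{-1} = d^{-1}t^{m-1}$ exists because $d \in S_0^\times \subseteq Z(S_f)$. Iterating $ta = \sigma(a)t$ past each $a_i$ gives $G_t(g) = \sum_{i=0}^{m-1} \sigma(a_i)t^i$, so $G_t(g) = g$ if and only if each $a_i$ lies in ${\rm Fix}(\sigma)$, which yields the equality $S_f^{G_t} = {\rm Fix}(\sigma)[t]/{\rm Fix}(\sigma)[t](t^m-d)$ claimed in (ii). A useful side observation is that since $d \in Z(S_f)$, we have $G_t^m = G_{t^m} = G_d = id_{S_f}$, so $\langle G_t \rangle$ is a finite cyclic group of automorphisms of the Azumaya algebra $S_f$, and the conjugation action is inner.

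With this data in place, the remaining content of (i) and (ii) --- that $S_f$ is a $G_t$-Galois extension of $S_f^{G_t}$, that $S_f^{G_t}$ is a bimodule direct summand, and that it is Azumaya with center $S_0[t]/S_0[t](t^m-d)$ --- follows by direct application of \cite[Theorem 3.3]{SzX} to the pair $(S_f, \langle G_t \rangle)$. The main subtlety I would watch for is verifying that Szeto's hypotheses really do apply in the present noncommutative-coefficient setting (the coefficient ring $D$ is Azumaya rather than merely commutative): this amounts to checking that $S_f$ is Azumaya over $S_0$ (which is a hypothesis) and that the finite cyclic group $\langle G_t \rangle$ consists of $S_0$-algebra automorphisms of $S_f$ with $t$ a unit of $S_f$ such that $t^m \in Z(S_f)^\times$. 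I do not anticipate an obstacle beyond this bookkeeping, and in particular no new computation is required for the center description, which is part of Szeto's conclusion.
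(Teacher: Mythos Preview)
Your approach is essentially the paper's: compute $S_f^{G_t}$ directly from $ta=\sigma(a)t$ (the paper does this in the proof of Theorem \ref{thm:mainAzumayaII}) and then invoke Szeto's results in \cite{SzX} for everything else. Two small points of divergence are worth noting.

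First, the paper is careful to say that the \emph{proofs} in \cite{SzX} carry over, rather than citing the theorem statements as black boxes: for (i) it is the proof of \cite[Theorem 3.3]{SzX}, and for (ii) it is the proof of \cite[Theorem 3.1]{SzX}. The reason is that \cite{SzX} assumes $\sigma$ itself has order $m$ on $D$, whereas here only $\sigma|_C$ is assumed to have order $m$; the paper observes that, because $d\in S_0^\times\subset C$ and $\sigma|_C$ has order $m$, the arguments in \cite{SzX} go through unchanged. Your hypothesis check (``$D$ Azumaya rather than commutative'') is not the relevant one---\cite{SzX} already works with noncommutative $D$---so you should replace that sentence with the observation about the order of $\sigma$ versus $\sigma|_C$.

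Second, for (ii) the paper points to \cite[Theorem 3.1]{SzX} rather than 3.3; you may wish to align the citations. Otherwise your write-up is fine, and your side remark that $G_t^m=G_d=id_{S_f}$ (so $\langle G_t\rangle$ is finite cyclic) is exactly the reason the Galois-theoretic machinery applies.
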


\begin{proof}
 (i) The proof of \cite[Theorem 3.3]{SzX} for this statement carries over verbatim. Note that
 $t$ is invertible in $S_f$ with inverse $t^{m-1}d^{-1}$.
 \\ (ii) The corresponding part of the proof of \cite[Theorem 3.1]{SzX} does not need the assumption that
$\sigma\in {\rm Aut}(D)$ has finite order $m$, since we assume that $d\in S_0^\times$, so in particular that $d\in C$, and we assume that $\sigma|_C$ has finite order order $m$.
\end{proof}

\begin{definition}
Let $D$ be an Azumaya algebra of constant rank with center $C$ and $f(t)=t^m-d\in D[t;\sigma]$ with $d\in S_0^\times$.
Then the associative algebra
$S_f=D[t;\sigma]/D[t;\sigma]f$ in Theorem \ref{thm:mainAzumaya} is called a \emph{generalized cyclic Azumaya algebra}.
We write $(D,\sigma, d)$ for this algebra.
\end{definition}

We follow Jacobson's terminology \cite[p.~19]{J96}. Note that for  $d\in S_0^\times$, $(D,\sigma, d)$ can also be viewed as a crossed product algebra.

In particular, if $D=C$ is a commutative ring, $C/S_0$ is a cyclic Galois extension of degree $m$ with Galois group generated by $\sigma$, and $ f(t)=t^m-d\in S_0[t]$, then we obtain an associative Azumaya algebra we will denote by $(C/S_0,\sigma,d)=C[t,\sigma]/C[t;\sigma]f$, which we call a \emph{cyclic Azumaya algebra}. Note that if $C/S_0$ is a cyclic field extension, then
$(C/S_0,\sigma,d)$ is an associative cyclic algebra over $S_0$ of degree $m^2$.

\begin{remark}
(i)
This construction of Azumaya algebras was first mentioned in \cite{PS} for  $f(t)=t^2-1\in D[t;\sigma]$ with $D$ a commutative ring, as a generalization of the classical quaternion algebra.
It was shown that $S_f=(D,\sigma, d)$ is an Azumaya algebra over $S_0$, that $D$ is a maximal commutative subalgebra of $(D,\sigma, d)$ \cite[Lemma 3.1]{Sz1}, and that $(D,\sigma, d)$ is a separable extension over ${\rm Fix}(\sigma)$. The centralizer of $D$ in $(D,\sigma, d)$ given by $\{g\in S_f\,|\, gs=sg \text{ for all } s\in D\}$ was shown to be $D$. Moreover, $S\otimes_{S_0} (D,\sigma, d) \cong {\rm Mat}_n(S)$.
(In \cite{Sz}, $(D,\sigma, d)$  is called a \emph{generalized quaternion ring extension.})
\\ (ii) The associative setting in \cite{SzX} is more restrictive than the one we consider: in \cite{SzX}, the ring automorphism $\sigma\in {\rm Aut}(D)$ is always required to have finite order $m$, and $d\in {\rm Fix}(\sigma)$.
 Since $m$ is the order of $\sigma$, in that case $t^mb=\sigma^m(b)t^m=bt^m$ for all $b\in D$, which implies $db=bd$ in $S_f$ for all $b\in D$, hence $d\in C$, i.e. $d\in C\cap {\rm Fix}(\sigma)$.
\\ (iii) If $D$ is a central simple algebra over a field $F$ of degree $n$, then $(D,\sigma, d)$ is a central simple algebra over the field $ S_0$ of degree $mn$ and the centralizer of $D$ in $(D,\sigma, d)$ is $F$ \cite[p.~20, Proposition 1.4.4]{J96}.
 \\ (iv) If $m$  is an invertible integer in $D$, then $S_0[t]/S_0[t](t^m-d)$ is a separable extension of $S_0$  contained in $A=(D,\sigma,d)$ \cite{SzX}.
\end{remark}

Our definition of associative generalized cyclic Azumaya algebras generalizes to nonassociative algebras as follows:

\begin{definition}
Let $D$ be an Azumaya algebra over $C$ and $f(t)=t^m-d\in D[t;\sigma]$, $d\in D^\times$. The algebra $(D,\sigma, d)=D[t;\sigma]/D[t;\sigma]f$ over $S_0$ is called a \emph{nonassociative generalized  cyclic Azumaya algebra}.
\end{definition}

In particular, if $D$ has constant rank $n$, then the algebra
$A=(D, \sigma, d)$, $d\in D^\times$, is finitely generated as an $S_0$-module of constant rank $m^2n^2$.

If  $S$ is a commutative ring, and $S/S_0$ is a cyclic Galois extension of degree $m$ with Galois group generated by
$\sigma$ and $ f(t)=t^m-d\in S[t;\sigma]$, we call the algebra $S[t,\sigma]/S[t;\sigma]f$ a \emph{nonassociative cyclic Azumaya algebra} and denote it by $(S/S_0,\sigma,d)$ (this is the case that $D=C$).

The following generalizes \cite[Lemma 3.2]{SzX} to the nonassociative setting:

\begin{proposition}\label{prop:cent}
For all $d\in D^\times$, the centralizer  ${\rm Cent}_{(D,\sigma, d)}(C)$ of $C$  in $(D,\sigma, d)$ is $D$.
\end{proposition}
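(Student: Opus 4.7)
The inclusion $D\subseteq {\rm Cent}_{(D,\sigma,d)}(C)$ is immediate, since $C=C(D)$ means every element of $C$ already commutes with every element of $D$, and the multiplication of $D$ is preserved in $S_f$. So my plan focuses on the reverse inclusion: any $g\in S_f$ that commutes with $C$ must already be a constant skew polynomial in $D$.

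Write $g=\sum_{i=0}^{m-1}a_i t^i$ with $a_i\in D$. I would compute $cg$ and $gc$ in $S_f$ for an arbitrary $c\in C$. Since $D\subset {\rm Nuc}_l(S_f)$ by the general Petit-algebra discussion (this holds regardless of associativity of $S_f$), and since the identity $t^ic=\sigma^i(c)t^i$ in $D[t;\sigma]$ has degree $i<m$ so involves no reduction modulo $f$, I obtain
\[
cg=\sum_{i=0}^{m-1}(ca_i)t^i=\sum_{i=0}^{m-1}(a_ic)t^i,\qquad gc=\sum_{i=0}^{m-1}(a_i\sigma^i(c))t^i,
\]
using centrality of $c$ in $D$ on the left and $a_i\in {\rm Nuc}_l(S_f)$ on the right. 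Equating coefficients of $t^i$ gives the key condition
\[
a_i\bigl(\sigma^i(c)-c\bigr)=0 \quad\text{in }D,\quad\text{for all }c\in C \text{ and all }0\le i\le m-1.
\]

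The remaining step, which I expect to be the main obstacle, is to conclude $a_i=0$ for $1\le i\le m-1$ from this family of relations. Note that $d$ has not entered the computation, because all intermediate products have degree $<m$; this is what lets the statement hold for arbitrary invertible $d\in D^\times$ rather than just $d\in S_0^\times$. The obstacle is that $D$ is only assumed to be an Azumaya algebra, so I cannot cancel $\sigma^i(c)-c$ naively. Instead I would invoke condition (ii) of the Chase--Harrison--Rosenberg characterization of the Galois extension $C/S_0$ recalled in Section~\ref{sec:prel}: for the element $\sigma^i\in {\rm Gal}(C/S_0)=\langle\sigma|_C\rangle$ (which is non-identity for $1\le i\le m-1$), there exist elements $x_j,y_j\in C$ such that
\[
\sum_j x_j\,\tau(y_j)=\delta_{\tau,\sigma^i}\qquad\text{for all }\tau\in\mathrm{Gal}(C/S_0).
\]
Choosing $\tau=\mathrm{id}$ and $\tau=\sigma^i$ yields $\sum_j x_jy_j=0$ and $\sum_j x_j\sigma^i(y_j)=1$, hence $\sum_j x_j\bigl(\sigma^i(y_j)-y_j\bigr)=1$.

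Specializing the boxed relation to $c=y_j$ gives $a_i(\sigma^i(y_j)-y_j)=0$ in $D$ for each $j$. Multiplying by $x_j\in C$ on the right (so the elements $x_j$ and $\sigma^i(y_j)-y_j\in C$ commute inside $D$) and summing over $j$, I obtain
\[
a_i=a_i\cdot\sum_j x_j\bigl(\sigma^i(y_j)-y_j\bigr)=\sum_j a_i\bigl(\sigma^i(y_j)-y_j\bigr)\,x_j=0.
\]
Therefore $a_i=0$ for $1\le i\le m-1$, and $g=a_0\in D$, completing the proof. The only delicate ingredients are the Nucleus inclusion $D\subset {\rm Nuc}_l(S_f)$ (which avoids any associativity issue in the computation of $cg$ and $gc$) and the Galois dual-basis-type identity, both of which are available from the preliminaries.
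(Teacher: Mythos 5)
Your proof is correct and follows essentially the same route as the paper: reduce to the coefficient relations $a_i(\sigma^i(c)-c)=0$ and then use that $C/S_0$ being Galois forces $a_i=0$ for $i\ge 1$. The only difference is cosmetic: where the paper cites the fact that the ideal of $C$ generated by $\{c-\sigma^i(c)\}$ is all of $C$, you derive it explicitly from the Chase--Harrison--Rosenberg condition (ii), which is a nice self-contained touch.
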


\begin{proof} The proof is analogous to the one in the associative case when $d\in S_0^\times$:
It is easy to see that $D\subset {\rm Cent}_{(D,\sigma, d)}(C)=\{a(t)\in (D,\sigma, d)\,|\, a(t)c=ca(t) \text{ for all } c\in C\}$.
Conversely, for each $\sum_{i=0}^{m-1}a_it^i\in {\rm Cent}_{(D,\sigma, d)}(C)$, we know that $c(\sum_{i=0}^{m-1}a_it^i)=(\sum_{i=0}^{m-1}a_it^i)c$ for all $c\in C$, implying that $a_i(c-\sigma^i(c))=0$ for all $c\in C$ and for all $i.$ Since by assumption $C$ is a cyclic Galois extension of $S_0$, the ideal of $C$ generated by $\{c-\sigma^i(c)\,|\,c\in C\}$ is $C$. This means $a_i=0$ for all $i>0$ and so $\sum_{i=0}^{m-1}a_it^i=a_0\in D$, yielding ${\rm Cent}_{S_f}(C)\subset D$.
\end{proof}

The general structure theory of Petit algebras gives us the following two results:

\begin{theorem}
Let  $(S/S_0,\sigma,d)$ be a nonassociative cyclic Azumaya algebra over $S_0$.
\\ (i) $(S/S_0,\sigma,d)$ is finitely generated as an $S_0$-module of constant rank $m^2$.
\\ (ii) $(S/S_0,\sigma,d)$  is associative if and only if  $d\in  S_0$. If $(S/S_0,\sigma,d)$ is not associative then
$(S/S_0,\sigma,d)$ contains $S$ in its left and middle nucleus, and if $S_0$ is a domain, then ${\rm Nuc}_l((D,\sigma,d))={\rm Nuc}_m((D,\sigma,d))=S$.
\\ (iii) Let $s$ be the smallest integer such that $d\in {\rm Fix}(\sigma^s)$. Then $rs=m$ for some integer $r$ and
 the finitely generated $S_0$-module $S\oplus St^s\oplus\dots\oplus St^{(r-1)s}$ of rank $mr$ lies in ${\rm Nuc}_r((S/S_0,\sigma,d))$. If $s\not=1$ is a prime or $S_0$ is a domain, then
 ${\rm Nuc}_r((S/S_0,\sigma,d))=S\oplus St^s\oplus\dots\oplus St^{(r-1)s}$.
\\ (iv) $S$ is a maximal commutative nonassociative subring of $(S/S_0,\sigma,d)$.
\end{theorem}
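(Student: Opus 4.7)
The plan is to treat the four parts in order, leveraging the general Petit algebra theory recalled in the preliminaries.

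Part (i) is immediate: $(S/S_0,\sigma,d)$ is a free left $S$-module of rank $m$ on the basis $1,t,\dots,t^{m-1}$, and $S$ is of constant rank $m$ over $S_0$ since $S/S_0$ is cyclic Galois of degree $m$, so transitivity of rank gives constant rank $m^2$ over $S_0$.

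For part (ii), I would invoke the Petit criterion ``$S_f$ is associative iff $Rf$ is two-sided''. Since $\sigma^m=\mathrm{id}$ on $S$, the relation $fb=bf$ for $b\in S$ is automatic, so the only obstruction is $ft\in Rf$; a one-line computation gives $ft-tf=(\sigma(d)-d)t$, which has degree $1<m$ and therefore lies in $Rf$ iff $\sigma(d)=d$, i.e.\ iff $d\in S_0$. The inclusions $S\subset\mathrm{Nuc}_l(S_f)$ and $S\subset\mathrm{Nuc}_m(S_f)$ in the nonassociative case are general Petit statements recalled in the preliminaries. For the reverse containments under the hypothesis that $S_0$ is a domain, I would evaluate associators $[g,t,t^j]$ for $g=\sum a_it^i$ and $j=m-1,m-2,\dots$; each produces a constraint of the form $a_i(\sigma^j(d)-d)=0$, and iteratively forcing $a_{m-1},\dots,a_1$ to vanish reduces to showing the factors $\sigma^j(d)-d$ are non-zero-divisors in $S$, which is where the domain assumption enters via base-change to the fraction field $K_0$ of $S_0$.

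Part (iii) runs along similar lines. The Petit formula $\mathrm{Nuc}_r(S_f)=\{g\in R:\deg g<m,\ fg\in Rf\}$ reduces the inclusion $\supset$ to a direct computation: for $b\in S$ and $0\le i<r$, commutativity of $S$, $\sigma^m=\mathrm{id}$, and $d\in\mathrm{Fix}(\sigma^s)$ together give $f\cdot bt^{is}=bt^{is}\cdot f\in Rf$. For the inclusion $\subset$, expanding
\[fg-gf=\sum_{i=0}^{m-1} a_i(\sigma^i(d)-d)t^i\]
shows that $g\in\mathrm{Nuc}_r(S_f)$ iff $a_i(\sigma^i(d)-d)=0$ for every $i$; minimality of $s$ says $\sigma^i(d)-d\ne 0$ exactly when $s\nmid i$, and the primality of $s$ (or the domain hypothesis on $S_0$) is invoked precisely to upgrade ``nonzero'' to ``non-zero-divisor in $S$'', forcing $a_i=0$ whenever $s\nmid i$. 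Part (iv) is then immediate from Proposition~\ref{prop:cent} applied with $D=C=S$, which yields $\mathrm{Cent}_A(S)=S$; the self-centralizing criterion from the preliminaries then gives that $S$ is maximal commutative in $A$.

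The main obstacle is the zero-divisor question shared by (ii) and (iii): namely, showing that $\sigma^i(d)-d$ is a non-zero-divisor of $S$ whenever it is nonzero. Because $S$ may fail to be a domain even when $S_0$ is, one cannot directly reduce to Petit's division-ring case. The cleanest route I see is to base-change to $K_0\otimes_{S_0}S=\prod L_j$, a product of cyclic Galois field extensions of $K_0$ on which $\sigma$ acts by cyclic permutation of factors combined with Galois action within each factor, and then track the vanishing of $\sigma^i(d)-d$ component by component.
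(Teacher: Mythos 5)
Your handling of (i), of the associativity criterion and the nuclear inclusions in (ii), of the inclusion $S\oplus St^s\oplus\dots\oplus St^{(r-1)s}\subseteq{\rm Nuc}_r$ in (iii), and of (iv) is correct and essentially the paper's route: the paper disposes of these by citing \cite[Theorem 4]{P15} and Proposition \ref{prop:cent}, and your reduction $fg-gf=\sum_i a_i(\sigma^i(d)-d)t^i$ together with $\{i:\sigma^i(d)=d\}=s\mathbb{Z}$ (whence $rs=m$) is exactly the computation behind that citation.

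The genuine gap is in the step you yourself flag as the main obstacle, namely the reverse inclusions in (ii) and (iii). You need that $\sigma^i(d)-d$ is a non-zero-divisor of $S$ whenever it is nonzero, and you propose to prove this by base-changing to $K_0\otimes_{S_0}S\cong\prod_j L_j$. That decomposition in fact exhibits the failure of the claim: a cyclic Galois extension of a domain need not be connected, and a component of $\sigma^i(d)-d$ can vanish while another does not. Concretely, take the split extension $S=S_0^m$ with $\sigma$ the cyclic shift, $S_0=\mathbb{Q}$, $m=4$, $d=(1,2,1,3)\in S^\times$. Then $s=4$, $r=1$, so (iii) asserts ${\rm Nuc}_r(S_f)=S$; but $\sigma^2(d)-d=(0,1,0,-1)$ is annihilated by $(1,0,1,0)\neq 0$, so $(1,0,1,0)\,t^2$ lies in $\{g:\deg g<m,\ fg\in Rf\}\subseteq{\rm Nuc}_r(S_f)$ and not in $S$. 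So the non-zero-divisor lemma is false under the stated hypotheses (it would require $S$ itself, not merely $S_0$, to be a domain, or at least $S$ connected), and your plan cannot be completed as described. Note also that your route for the ``$s\neq 1$ prime'' alternative is not a repair: primality of $s$ has no bearing on zero-divisors (with $m=6$ and $d=(1,1,2,1,1,2)$ one gets $s=3$ prime and $\sigma(d)-d=(1,0,-1,1,0,-1)$ a zero-divisor). The paper treats the prime case by a rank comparison inside the rank-$m^2$ module $S_f$ and the domain case by citing an external result, so your argument diverges from the paper's exactly at the point where it breaks down; the divergence is not a harmless alternative but the location of the missing (and, as stated, unprovable) lemma.
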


\begin{proof}
 (i) is trivial as $S/S_0$ has constant rank $m.$
\\ (ii) By our general theory on Petit algebras, we know that
$(S/S_0,\sigma,d)$ is an algebra with center $S_0$ satisfying these properties, cf. \cite[Theorem 4]{P15}.
 \\ (iii) We know that ${\rm Nuc}_r((S/S_0,\sigma,d))=\{g\in R\,|\, {\rm deg}(g)<m \text{ and }fg\in Rf\}$ by \cite[Theorem 4]{P15}. We have ${\rm Nuc}_r((S/S_0,\sigma,d))\cap S=S$. Since $d\in {\rm Fix}(\sigma^s)$ we can easily conclude that also $t^s\in {\rm Nuc}_r((S/S_0,\sigma,d))$. This implies that the finitely generated $S_0$-module $S\oplus St^s\oplus\dots\oplus St^{(r-1)s}$ lies in the right nucleus of  $(S/S_0,\sigma,d)$. Since $S/S_0$ has constant rank $m$ by our assumptions, this submodule has constant rank $mr$. Now ${\rm Nuc}_r((S/S_0,\sigma,d))$ is an $S_0$-submodule of $(S/S_0,\sigma,d)$ and $(S/S_0,\sigma,d)$ has rank $m^2$. Hence comparing ranks we conclude that if $s$ is prime then
 either $r=m$ and $S_f$ is an associative algebra, or $r<m$, and ${\rm Nuc}_r((S/S_0,\sigma,d))=S\oplus St^s\oplus\dots\oplus St^{(r-1)s}$.

 If $S_0$ is a domain, we also obtain that ${\rm Nuc}_r((S/S_0,\sigma,d))=S\oplus St^s\oplus\dots\oplus St^{(r-1)s}$ by the same proof as the one of \cite[Proposition 3.2.3]{St} (note that there this is proved for the opposite algebra).
\\ (iv)
This generalizes \cite[Lemma 3.1]{Sz1} and follows immediately from Proposition \ref{prop:cent} above:
For all $d\in S^\times$, the centralizer of $S$ in $(S/S_0,\sigma,d)$ is  ${\rm Cent}_{(D,\sigma, d)}(S)=S$.
\end{proof}

Hence $S$ is always contained in the nucleus of $(S/S_0,\sigma,d)$ and if $S_0$ is a domain, then the nucleus of $(S/S_0,\sigma,d)$ is $S$.

\begin{theorem}  \label{thm:maingenAzumaya}
Let $(D,\sigma,d)$ be a nonassociative generalized cyclic Azumaya algebra of constant rank $n^2m^2$, $S_0={\rm Fix}(\sigma)\cap C$.
\\ (i)  $(D,\sigma,d)$ is finitely generated as an $S_0$-module.
\\ (ii) $(D,\sigma,d)$  is associative if and only if  $d\in D\setminus S_0$. If $(D,\sigma,d)$ is not associative then
$(D,\sigma,d)$ contains $D$ in its left and middle nucleus, and if $S_0$ is a domain, then ${\rm Nuc}_l((D,\sigma,d))={\rm Nuc}_m((D,\sigma,d))=D$.
\\ (iii)  Let $s$ be the smallest integer such that $d\in {\rm Fix}(\sigma^s)$, then either $m=rs$ for some integer $r$ and
the left $S_0$-module
$$C\oplus C t^s\oplus\dots\oplus C t^{(r-1)s}$$
of constant rank $rm$ lies in ${\rm Nuc}_r((D,\sigma,d))$
or $m=rs+b$ for two integers $r$ and $b$ with $0< b< s$ and the left $S_0$-module
$$C\oplus C t^s\oplus\dots\oplus C t^{rs}$$
of constant rank $(r+1)m$ lies in $ {\rm Nuc}_r((D,\sigma,d)).$ In particular, $S_0\subset {\rm Nuc}((D,\sigma,d)).$
\\ (iv)  $D$ is separable over $S_0$.
\end{theorem}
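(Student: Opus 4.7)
The plan is to reduce each assertion to the Petit-algebra theory summarised in Section \ref{sec:prel}, combined with transitivity arguments and direct right-division computations modulo $f(t)=t^m-d$. For (i), $(D,\sigma,d)$ is free of rank $m$ as a left $D$-module on the basis $1,t,\dots,t^{m-1}$; since $D$ is Azumaya of constant rank $n^2$ over $C$ and $C/S_0$ is Galois of degree $m$, chaining these presentations shows $(D,\sigma,d)$ is finitely generated of constant rank $n^2m^2$ over $S_0$. For (iv), $D/C$ is separable (because $D$ is Azumaya over $C$) and $C/S_0$ is separable (being Galois), so transitivity of separability yields that $D/S_0$ is separable.

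For (ii), the general Petit theory supplies the inclusions $D\subseteq{\rm Nuc}_l(S_f)$ and $D\subseteq{\rm Nuc}_m(S_f)$ automatically and identifies associativity with $D[t;\sigma]f$ being a two-sided ideal of $D[t;\sigma]$; the implication $d\in S_0\Rightarrow$ associative is already Theorem \ref{thm:mainAzumaya}. For the converse I would evaluate the associator $[t,t^{m-1},b]$ modulo $f$, which collapses to $db-\sigma^m(b)d$, and specialise $b$ over $C$ and over $D$, using that $\sigma|_C$ has order $m$, to force $d\in C\cap{\rm Fix}(\sigma)=S_0$. Upgrading the two nucleus inclusions to equalities under the domain hypothesis on $S_0$ is the crux of this part and the main obstacle overall: the Petit reference \cite[Theorem 4]{P15} only supplies equality when the coefficient ring is a division ring, so in the Azumaya setting I would write an arbitrary $g=\sum a_it^i$ in the nucleus and substitute carefully chosen $c\in C$ into associators of the form $[g,c,t^j]$ and $[t^j,g,c]$, exploiting the Galois action of $\sigma^j$ on $C$ to peel off the higher coefficients $a_i$ one at a time, with the domain hypothesis on $S_0$ preventing zero-divisor cancellations.

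For (iii), the Petit identification ${\rm Nuc}_r(S_f)=\{g\in D[t;\sigma]:\deg g<m,\ fg\in D[t;\sigma]f\}$ reduces matters to three direct checks. First, for $c\in C$, since $\sigma^m|_C=\mathrm{id}$ and $cd=dc$, one has $(t^m-d)c=c(t^m-d)\in D[t;\sigma]f$, giving $C\subseteq{\rm Nuc}_r(S_f)$. Second, when $d\in{\rm Fix}(\sigma^s)$, a parallel computation shows $(t^m-d)t^s=t^s(t^m-d)\in D[t;\sigma]f$, so $t^s\in{\rm Nuc}_r(S_f)$. Third, since ${\rm Nuc}_r(S_f)$ is an associative subring closed under left multiplication by $C$, all products $ct^{is}$ with $c\in C$ and $is<m$ lie in it, yielding the asserted direct sum in both cases $m=rs$ and $m=rs+b$; the constant-rank count is immediate because each summand $Ct^{is}$ is $S_0$-isomorphic to $C$, of rank $m$. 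The closing inclusion $S_0\subseteq{\rm Nuc}((D,\sigma,d))$ then follows by combining this with (ii), since $S_0$ sits inside $C\subseteq{\rm Nuc}_r(S_f)$ and inside $D\subseteq{\rm Nuc}_l(S_f)\cap{\rm Nuc}_m(S_f)$.
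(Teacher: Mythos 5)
Your proposal is correct and follows essentially the same route as the paper's (very terse) proof: (i) by chaining the free rank-$m$ left $D$-module structure with the finiteness of $D/C$ and $C/S_0$, (iv) by transitivity of separability, (iii) by the Petit description ${\rm Nuc}_r(S_f)=\{g: \deg g<m,\ fg\in Rf\}$ together with the two computations $(t^m-d)c=c(t^m-d)$ for $c\in C$ (using $\sigma^m|_C=\mathrm{id}$ and centrality of $c$) and $(t^m-d)t^s=t^s(t^m-d)$ (using $d\in{\rm Fix}(\sigma^s)$), and (ii) by the general Petit structure theory. The one place you genuinely diverge is the equality ${\rm Nuc}_l={\rm Nuc}_m=D$ when $S_0$ is a domain: the paper disposes of this by citing \cite[Theorem 4]{P15}, whereas you correctly observe that the equality statement in that reference (as recalled in Section \ref{sec:prel}) is only asserted when the coefficient ring is a division ring, and you sketch a replacement argument via associators $[g,c,t^j]$ and the Galois action of $\sigma$ on $C$. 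Your instinct here is sound and the concern is legitimate, but note that your fix is only a sketch, and that the needed cancellation happens in $D$ (which need not be a domain even when $S_0$ is), so as written it is not yet a complete argument --- though it is no less complete than the paper's own citation. Finally, you silently correct the evident typo in the statement of (ii) (``associative if and only if $d\in D\setminus S_0$'' should read ``$d\in S_0$''), which is the right reading.
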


\begin{proof}
(i) This is proved analogously to \cite[Lemma 3.2]{SzX}.
\\ (ii) By our general theory on Petit algebras, we know that $(D,\sigma,d)$ is an algebra with center $S_0$ and the nuclei as claimed \cite[Theorem 4]{P15}.
\\ (iii) We have ${\rm Nuc}_r((D,\sigma,d))=\{g\in R\,|\, {\rm deg}(g)<m \text{ and }fg\in Rf\}$.
It is easy to check that $C\subset {\rm Nuc}_r((D,\sigma,d))$.
Let $s$ be the smallest integer such that $d\in {\rm Fix}(\sigma^s)$. Then a straightforward calculation shows that
$t^s\in {\rm Nuc}_r((D,\sigma,d))$. Hence $C\oplus C t^s\oplus\dots\oplus C t^{(r-1)s}\subset {\rm Nuc}_r((D,\sigma,d))$ if $m=rs$
and $C\oplus C t^s\oplus\dots\oplus C t^{rs}\subset {\rm Nuc}_r((D,\sigma,d))$ if $m=rs+b$.
\\ (iv) By construction, $(D,\sigma,d)$ is free of rank $m$ as a left $D$-module. Since $D$ is finitely generated as a $C$-module by assumption, and since $C$ is finitely
 generated as an $S_0$-module as it is a Galois extension, $(D,\sigma,d)$ is finitely generated as an $S_0$-module.
\\ (iv) is trivial by the assumptions on $D$ and $C/{\rm Fix}(\sigma)$.
\end{proof}

%
%

\section{Automorphisms of nonassociative generalized cyclic Azumaya algebras} \label{sec:gencyclic}

\subsection{Nonassociative generalized cyclic Azumaya algebras}
Let $D$ be an Azumaya algebra over $C$ of constant rank $n$ and $A = (D, \sigma, d)$ be a nonassociative generalized cyclic Azumaya algebra over $S_0=C\cap {\rm Fix}(\sigma)$. In the following $\sigma$ sometimes also stands for $\sigma|_C$ to simplify notation.

For $k\in C$, we define  $N_{C/S_0}:C\longrightarrow S_0$ via
 $$N_{C/S_0}(k)=\Big( \prod_{l=0}^{m-1}\sigma^l(k) \Big).$$
Some of the ring automorphisms of the skew polynomial ring $D[t;\sigma]$ canonically  induce algebra automorphisms of $(D,\sigma,d)$:

 \begin{theorem} \label{thm:automorphism_of_Sf_field_case}
 Let  $A=(D,\sigma,d)$ be a nonassociative generalized cyclic Azumaya algebra.
Let $\tau\in {\rm Aut}_{S_0}(D)$ be an algebra automorphism that commutes with $\sigma$. For all $k \in C^{\times}$ such that
\begin{equation} \label{eqn:neccessary}
\tau(d) = \Big( \prod_{l=0}^{m-1}\sigma^l(k) \Big) d,
\end{equation}
define  $H_{\tau , k}:(D,\sigma,d)\longrightarrow (D,\sigma,d)$ via
$$
H_{\tau , k}(\sum_{i=0}^{m-1} x_i t^i )=\sum_{i=0}^{m-1} \tau(x_i) (kt)^i $$
$$= \tau(x_0) + \tau(x_1)kt +
\tau(x_2)k\sigma(k)t^2+\cdots +
\tau(x_{m-1}) k \sigma(k) \cdots \sigma^{m-2}(k) t^{m-1}.
$$
 Then $H_{\tau , k}$ is an automorphism of $A$ that extends $\tau$. $H_{\tau , k}$ is canonically induced by a ring automorphism of $D[t;\sigma]$.
 \\ (ii) For all $k \in C^{\times}$  such that $N_{C/S_0}(k) =1$,
$$H_{id,k}(\sum_{i=0}^{m-1}a_it^i)=  a_0 + \sum_{i=1}^{m-1} a_i \big( \prod_{l=0}^{i-1}\sigma^l(k) \big) t^i$$
is an automorphism of $(D,\sigma,d)$ extending $id_{D}$.
\end{theorem}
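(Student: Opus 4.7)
\medskip

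\noindent\textbf{Proof plan.} My plan is to first construct a ring endomorphism $\widetilde{H}_{\tau,k}$ of the skew polynomial ring $D[t;\sigma]$, then push it down to the quotient $A=D[t;\sigma]/D[t;\sigma]f$ to obtain $H_{\tau,k}$. Define $\widetilde{H}_{\tau,k}$ by $a\mapsto \tau(a)$ for $a\in D$ and $t\mapsto kt$, and extend additively and multiplicatively. Since $D[t;\sigma]$ is presented by the defining relations of $D$ (preserved because $\tau$ is a ring automorphism of $D$) together with the skew relation $ta=\sigma(a)t$ for $a\in D$, the only thing to check for well-definedness is that this skew relation is preserved. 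Applying the putative $\widetilde{H}_{\tau,k}$ to both sides gives the equation $(kt)\tau(a)=\tau(\sigma(a))(kt)$, i.e.\ $k\sigma(\tau(a))t=\tau(\sigma(a))k\,t$; since $k\in C=Z(D)$ commutes with every element of $D$, this reduces to $\sigma(\tau(a))=\tau(\sigma(a))$, which holds by hypothesis. Hence $\widetilde{H}_{\tau,k}$ is a well-defined ring endomorphism of $D[t;\sigma]$.

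Next I would verify that $\widetilde{H}_{\tau,k}$ preserves the left ideal $D[t;\sigma]f$ generated by $f=t^m-d$. An easy induction using $tc=\sigma(c)t$ for $c\in C$ shows
\[
(kt)^m=\Big(\prod_{l=0}^{m-1}\sigma^l(k)\Big)\,t^m = N_{C/S_0}(k)\,t^m,
\]
so
\[
\widetilde{H}_{\tau,k}(f)=(kt)^m-\tau(d)=N_{C/S_0}(k)\,t^m-\tau(d).
\]
By the compatibility hypothesis \eqref{eqn:neccessary}, $\tau(d)=N_{C/S_0}(k)\,d$, so $\widetilde{H}_{\tau,k}(f)=N_{C/S_0}(k)(t^m-d)=N_{C/S_0}(k)\,f\in D[t;\sigma]f$. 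Therefore $\widetilde{H}_{\tau,k}$ descends to a well-defined ring endomorphism $H_{\tau,k}$ of the quotient $A=S_f$, and writing out its action on the basis $1,t,\dots,t^{m-1}$ recovers precisely the formula in the statement.

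It remains to check that $H_{\tau,k}$ is an $S_0$-algebra automorphism of $A$. It fixes $S_0$ because $S_0\subset C\subset D$ and $\tau\in{\rm Aut}_{S_0}(D)$, so $H_{\tau,k}$ is $S_0$-linear and restricts to $\tau$ on $D\hookrightarrow A$ (the $t^0$-component). For bijectivity, I note that on the free left $D$-module decomposition $A=\bigoplus_{i=0}^{m-1}Dt^i$ the map $H_{\tau,k}$ acts ``block-diagonally'': the $i$-th component $Dt^i\to Dt^i$ sends $a_it^i$ to $\tau(a_i)\,\prod_{l=0}^{i-1}\sigma^l(k)\,t^i$. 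Each block is a composition of the bijection $\tau$ on $D$ with left multiplication by a unit of $C$, hence a bijection of $D$ as an abelian group; therefore $H_{\tau,k}$ is bijective. Being induced from a ring endomorphism of $D[t;\sigma]$, it is also multiplicative, so it is an automorphism of $A$ extending $\tau$, as desired.

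Finally, part (ii) is immediate by specializing to $\tau=\mathrm{id}_D$: the compatibility condition \eqref{eqn:neccessary} becomes $d=N_{C/S_0}(k)\,d$, which since $d$ is invertible is equivalent to $N_{C/S_0}(k)=1$, and the formula for $H_{\mathrm{id},k}$ then reduces to the one displayed in the statement. The only non-routine point in the whole argument is the preservation of the skew relation, which pinpoints exactly why one needs both $\tau\sigma=\sigma\tau$ and $k\in C$; all remaining steps are bookkeeping with the formula $(kt)^m=N_{C/S_0}(k)t^m$.
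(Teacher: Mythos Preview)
Your approach is essentially the paper's: build the map on $D[t;\sigma]$ via $a\mapsto\tau(a)$, $t\mapsto kt$, compute $\widetilde{H}_{\tau,k}(f)=N_{C/S_0}(k)\,f$, and conclude that it induces an automorphism of $S_f$; the paper simply cites external references (\cite{K}, \cite{LS}, \cite{CB}) for the steps you carry out by hand.

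One point to tighten. In the genuinely nonassociative case ($d\notin S_0$) the left ideal $D[t;\sigma]f$ is \emph{not} two-sided, so $S_f$ is not a quotient ring in the usual sense and the sentence ``$\widetilde{H}_{\tau,k}$ descends to a well-defined ring endomorphism of the quotient'' is not literally justified by $\widetilde{H}_{\tau,k}(f)\in D[t;\sigma]f$ alone. The correct argument---which you have all the ingredients for---is that $\widetilde{H}_{\tau,k}$ preserves degree and sends $f$ to a unit multiple of $f$, so if $gh=qf+r$ with $\deg r<m$ then $\widetilde{H}_{\tau,k}(gh)=\widetilde{H}_{\tau,k}(q)\,N_{C/S_0}(k)f+\widetilde{H}_{\tau,k}(r)$ with $\deg\widetilde{H}_{\tau,k}(r)<m$; hence $\widetilde{H}_{\tau,k}$ commutes with ${\rm mod}_r f$ and is multiplicative for $\circ$. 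This is precisely what the paper packages as ``$S_f\cong S_{G(f)}$'' together with ``$S_{af}=S_f$ for $a$ invertible''.
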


\begin{proof}
(i) Let $G$ be a ring automorphism of $D[t;\sigma]$. Then for $h(t) = \sum_{i=0}^{m-1} b_i t^i \in D[t; \sigma]$ we have
$$G(h(t))=\tau(b_0)  +\sum_{i=i}^{m-1}\tau(b_i) \prod_{l=0}^{i-1}\sigma^l(k) t^i$$
for some $\tau\in{\rm Aut}(D)$  such that $\sigma\circ\tau=\tau\circ\sigma$ and some $k\in C^\times$
(the proof of  \cite[p.~75]{K} works for $D[t;\sigma]$). It is straightforward to see that for $\tau\in {\rm Aut}_{S_0}(D)$  and  $S_f=(D, \sigma, d)$, we have $S_f\cong S_{G(f)}$ (cf. \cite[Theorem 7]{LS} or \cite[p. 55 ff.]{CB}, the proofs also work when $D$ is not a division algebra).
In particular, this means that if $k\in C^\times$ satisfies  \eqref{eqn:neccessary} then
$G(f(t)) = \big( \prod_{l=0}^{m-1} \sigma^l(k) \big) f(t)=af(t)$ with $a\in D^\times$ being the product of the $\sigma^l(k)$, and so $G$ induces an isomorphism of $S_f$ with $ S_{af}=S_f$, i.e. an automorphism of $S_f=(D, \sigma, d)$.
\\ (ii) follows from (i).
\end{proof}

All the automorphisms of a nonassociative generalized cyclic Azumaya algebra $(D, \sigma, d)$ are canonically induced by ring automorphisms of the twisted polynomial ring $D[t;\sigma]$, i.e.  the maps $H_{\tau,k}$ are all possible automorphisms:

\begin{theorem} \label{thm:aut2}
Let $A = (D, \sigma, d)$, $d \in  D^\times$, be a nonassociative generalized cyclic Azumaya algebra of constant rank $m^2n^2$
over $S_0$.
\\ (i) Let $H \in {\rm Aut}_{S_0}(A)$. Then $H=H_{\tau,k}$ with
\begin{equation} \label{eqn:neccessaryII}
H_{\tau,k}( \sum_{i=0}^{m-1} a_i t^i )=\tau(a_0)+\sum_{i=1}^{m-1}\tau(a_i) \big( \prod_{l=0}^{i-1} \sigma^l(k) \big) t^i,
\end{equation}
for some $\tau\in {\rm Aut}_{S_0}(D)$ which commutes with $\sigma$, and some $k \in C^\times$  such that
 $\tau(d) = N_{C/S_0}(k) d$. All maps $H_{\tau,k}$ where $\tau \in {\rm Aut}_{S_0}(D)$ commutes with $\sigma$ and where $k \in C^\times$  such that
 $\tau(d) = N_{C/S_0}(k) d$ 
 are automorphisms of $A$, and induced by automorphisms of $D[t;\sigma]$.
\\ (ii) For all $k\in C^\times$ such that $N_{C/S_0}(k) = 1 $, $id\in {\rm Aut}(D)$ extends to an automorphism $H=H_{id,k}\in {\rm Aut}_{S_0}(A)$,
$$H_{id,k}( \sum_{i=0}^{m-1}a_i t^i)= a_0 +\sum_{i=1}^{m-1} a_i \big( \prod_{l=0}^{i-1} \sigma^l(k) \big) t^i,$$
\end{theorem}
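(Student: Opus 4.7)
The plan is to split into direct and converse directions. The direct direction---that maps $H_{\tau,k}$ satisfying the compatibility hypotheses on $\tau$ and $k$, including the specific formula in part (ii), are $S_0$-automorphisms induced by ring automorphisms of $D[t;\sigma]$---is exactly the content of Theorem \ref{thm:automorphism_of_Sf_field_case}, so no new work is required there.

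For the converse, given $H \in {\rm Aut}_{S_0}(A)$, the plan proceeds in two main steps: first show $H(D) = D$, then extract $\tau$ and $k$. For the first step, the strategy is to give an intrinsic characterization of $D$ inside $A$ that is preserved by $S_0$-automorphisms. The tool is Proposition \ref{prop:cent}, which identifies $D$ as ${\rm Cent}_A(C)$; combined with Theorem \ref{thm:maingenAzumaya}(ii), which places $D$ inside the left and middle nuclei of $A$ (both of which are intrinsic and therefore automorphism-invariant), one identifies $C$ as the distinguished cyclic Galois extension of $S_0$ of constant rank $m$ sitting inside these nuclei. Since $H$ preserves the nuclei, the separability structure, and the center $S_0$, one argues that $H(C) = C$, whence $H(D) = H({\rm Cent}_A(C)) = {\rm Cent}_A(C) = D$.

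Once $H(D) = D$ is in hand, set $\tau := H|_D \in {\rm Aut}_{S_0}(D)$ and write $H(t) = \sum_{i=0}^{m-1} b_i t^i$ with $b_i \in D$. Applying $H$ to the skew-commutation relation $ta = \sigma(a)t$ in $A$ yields
$$H(t)\,\tau(a) = \tau(\sigma(a))\,H(t)$$
for every $a \in D$. Expanding both sides via the skew multiplication and comparing coefficients of $t^i$ collapses $H(t)$ to a monomial $H(t) = kt$ with $k \in D$ satisfying $k\,\sigma(\tau(a)) = \tau(\sigma(a))\,k$ for all $a \in D$ (no other $b_i$ is compatible with arbitrary $a$). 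Specializing $a \in C$ forces $k$ to commute with every element of $C$, so $k \in Z(D) = C$, after which the surviving relation reads $\tau \circ \sigma = \sigma \circ \tau$ on $D$. Finally, applying $H$ to the defining relation $t^m = d$ and expanding $(kt)^m = k\,\sigma(k)\cdots\sigma^{m-1}(k)\,t^m = N_{C/S_0}(k)\,d$ against $\tau(d)$ yields the norm condition $\tau(d) = N_{C/S_0}(k)\,d$, and the formula in part (ii) falls out by specializing $\tau = {\rm id}_D$.

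The main obstacle is the first step, establishing $H(D) = D$. When $S_0$ is a domain, the identification ${\rm Nuc}_l(A) = D$ from Theorem \ref{thm:maingenAzumaya}(ii) makes this transparent; in general, including the associative Azumaya case where the nuclei coincide with $A$ itself and do not pin down $D$, one must lean on the subtler characterization of $C$ as the intrinsic distinguished cyclic Galois subextension of $S_0$ of rank $m$ contained in the centralizer of $D$, together with Proposition \ref{prop:cent}. Making this identification robust is the technical heart of the proof; the subsequent extraction of $\tau$, $k$ and the norm condition is routine skew-polynomial bookkeeping.
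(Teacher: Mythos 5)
Your overall route is the same as the paper's: the forward direction is delegated to Theorem \ref{thm:automorphism_of_Sf_field_case}, and the converse proceeds by first arguing that $H$ restricts to an automorphism of $D$ via Proposition \ref{prop:cent}, then writing $H(t)=\sum_i b_i t^i$, applying $H$ to $ta=\sigma(a)t$ and to $t^m=d$, and comparing coefficients. However, there is a genuine gap in the coefficient-comparison step as you justify it. Comparing coefficients only yields the relations $b_i\bigl(\sigma^i(\tau(z))-\tau(\sigma(z))\bigr)=0$ for all $z$; over a field a nonzero $b_i$ is not a zero divisor and the conclusion $b_i=0$ ($i\neq 1$) is immediate, which is exactly what your parenthetical ``no other $b_i$ is compatible with arbitrary $a$'' amounts to. Over the ring $C$ this reasoning fails: $\sigma^{i-1}(w)-w$ could in principle always be annihilated by a fixed nonzero $b_i$. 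The paper closes this by restricting to $z\in C$, noting that $\tau|_C$ lies in $\mathrm{Gal}(C/S_0)$ and hence commutes with $\sigma|_C$, and then invoking the Galois property of $C/S_0$: the ideal of $C$ generated by $\{c-\sigma^{j}(c)\mid c\in C\}$ is all of $C$ for $j\not\equiv 0\ (\mathrm{mod}\ m)$ (\cite[p.~80]{DI}), which is what forces $b_i=0$. This is precisely where the ring hypothesis enters and it is not routine bookkeeping.

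On the first step, you correctly observe that invoking $D=\mathrm{Cent}_A(C)$ presupposes $H(C)=C$, and you rightly flag this as the delicate point (the paper itself is terse here, simply asserting that $H$ leaves $\mathrm{Cent}_A(C)$ invariant and remarking that in the genuinely nonassociative case one could instead use invariance of the nuclei). But your proposed resolution --- characterizing $C$ as ``the distinguished cyclic Galois extension of $S_0$ of rank $m$ sitting inside the nuclei'' --- is a programme rather than an argument: in the associative Azumaya case the nuclei are all of $A$ and carry no information, and you do not actually exhibit an automorphism-invariant description of $C$ (or of $D$) inside $A$ that would force $H(C)=C$. As written, the converse direction therefore rests on an unproved claim at its first step and on field-case reasoning at its second; both can be repaired, but neither repair is supplied.
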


The proof is similar to the one of \cite[Theorem 6]{BP18}, but works also when the algebra is associative, as it does not rely on the right nucleus being $D$, whereas the proof of \cite[Theorem 6]{BP18} did.

\begin{proof}
(i) Let $H \in {\rm Aut}_{S_0}(A)$. Then $H|_D\in {\rm Aut}_{S_0}(D)$, since $H$ leaves the commutator ${\rm Cent}_{(D,\sigma, d)}(C)$ invariant and we know that ${\rm Cent}_{(D,\sigma, d)}(C)=D$. (The argument in the proof of \cite[Theorem 6]{BP18} uses instead that the right nucleus is invariant under $H$).

Thus $H|_D = \tau$ for some $\tau\in {\rm Aut}_{S_0}(D)$. Write
$H(t) = \sum_{i=0}^{m-1} k_i t^i$ for some $k_i \in D$, then we have
$$H(tz) = H(t)H(z) = \big( \sum_{i=0}^{m-1} k_i t^i \big) \tau(z) = \sum_{i=0}^{m-1} k_i \sigma^i(\tau(z)) t^i,$$
and
$$H(tz) = H(\sigma(z)t) = \tau(\sigma(z)) \sum_{i=0}^{m-1} k_i t^i = \sum_{i=0}^{m-1} \tau(\sigma(z)) k_i t^i$$
for all $z\in D$. Comparing the coefficients of $t^i$ yields
$$k_i \sigma^i(\tau(z))  = \tau(\sigma(z)) k_i \text{ for all } i =\{0,\dots,m-1\}$$
for all $z \in D$. This implies that
$$k_i (\sigma^i(\tau(z)) - \tau(\sigma(z))) = 0 \text{ for all } i  \in \{0,\dots,m-1\}$$
for all $z \in C$.
 Now $\sigma$ restricted to $C$ generates the Galois group of the cyclic Galois extension $C/S_0$ by assumption, and $\tau|_C:C\longrightarrow C$ fixes $S_0$ by assumption, thus lies in this Galois group. Hence $\tau|_C$ commutes with $\sigma|_C$ and we obtain
$$k_i (\sigma^i(\tau(z)) - \sigma(\tau(z))) = 0 \text{ for all } i  \in \{0,\dots,m-1\}$$
for all $z \in C$, therefore
$$k_i (\sigma^{i-1}(w) - w) = 0 \text{ for all } i  \in \{1,\dots,m-1\}$$
for all $w\in C$.
 As $\sigma \vert_C$ has order $m$, we know that
 $\sigma \vert_C \not= \sigma^i \vert_C$ for all $1\not= i  \in \{0, \dots, m-1\}$.

 Since $C/S_0$ is Galois, we also know that the ideal of $C$ generated by $\{c-\sigma^i(c)\,|\, c\in C\}$ is all of $C$
  by \cite[p. 80]{DI}. That means $k_i = 0$ for all $1\not=i \in \{ 0, \ldots, m-1 \}$.
 For $i = 1$, we obtain $k_1 \tau(\sigma(z)) = \tau(\sigma(z)) k_1$
for all $z\in D$, hence $k_1 \in C$. This implies $H(t) = kt$ for some $k \in C^{\times}$.

Since
$$H(z t^i) = H(z) H(t)^i = \tau(z) (kt)^i = \tau(z) \Big( \prod_{l=0}^{i-1} \sigma^l(k) \Big) t^i,$$
for all $i \in \{ 1, \ldots, m-1 \}$ and all $z \in D$, $H$ has the form
$$H_{\tau,k}: \sum_{i=0}^{m-1} a_i t^i \mapsto \tau(a_0) + \sum_{i=1}^{m-1} \tau(a_i) \big( \prod_{l=0}^{i-1} \sigma^l(k) \big) t^i,$$
for some $k \in C^{\times}$.

Comparing the constant terms in $H(t)^m = H(t^m) = H(d)$ implies
$$\tau(d) = k \sigma(k) \cdots \sigma^{m-1}(k) d = N_{C/S_0}(k) d.$$

The fact that $H_{\tau,k}$ is by assumption a multiplicative map forces $\sigma$ and $\tau$ to commute:
$H_{\tau , k}(t\circ  c)= H_{\tau , k}(t)  H_{\tau , k}(c)$ for all $c\in D$ implies that
$$H_{\tau , k}(t\circ  c)= H_{\tau , k}(\sigma(c)t)=\tau(\sigma(c))k t$$
 and
$$H_{\tau , k}(t)  H_{\tau , k}(c)=kt \circ \tau(c)=k \sigma(\tau(c))t.$$
Thus we obtain $\tau(\sigma(c))k=k \sigma(\tau(c))$ for all $c\in D$ and some $k\in C^\times$, implying
$\tau(\sigma(c))=\sigma(\tau(c))$ for all $c\in D$.

By Theorem \ref{thm:automorphism_of_Sf_field_case} all these maps are automorphisms induced by automorphisms of the skew polynomial ring, since $\sigma$ and $\tau$ commute.
\\ (ii) For $\tau=id_D$, $H$  has the form
$$H_{id,k} (\sum_{i=0}^{m-1}a_i t^i )= a_0+\sum_{i=1}^{m-1} a_i \big( \prod_{l=0}^{i-1} \sigma^l(k) \big) t^i$$
for some $k\in C^\times$ with $k\sigma(k)\cdots \sigma^{m-1}(k) = N_{C/S_0}(k) = 1$ by (i).
\end{proof}

 It is clear that $H_{\tau,k}=H_{\rho,l}$ if and only if $\sigma=\rho$ and $k=l$, and that $H_{\tau,k}\circ H_{\rho,l}=H_{\tau\circ \rho,kl}$.

 \begin{corollary}\label{cor:7}
 (i) The subgroup of $S_0$-automorphisms of  $(D,\sigma,d)$
extending $id_D$ is isomorphic to
$$\{ k \in C^\times\,|\, N_{C/S_0}(k) = 1 \}.$$
 \\ (ii) Suppose that $S_0$ contains an $m$th root of unity $\omega$. If  $\tau$ has finite order $s$, then the cyclic subgroup $\langle H_{\tau,\omega}\rangle$ of ${\rm Aut}_{S_0}(A)$  generated by $H_{\tau,\omega}$ has order at most $ms$.
(If $\tau$ has infinite order then this subgroup has infinite order.)
In particular, $\langle H_{id,\omega}\rangle$
is a cyclic subgroup of ${\rm Aut}_{S_0}((D,\sigma,d))$ of order at most $m$.
 \end{corollary}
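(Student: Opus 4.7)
The plan for (i) is to apply Theorem \ref{thm:aut2} directly. By part (i) of that theorem, any $H\in {\rm Aut}_{S_0}(A)$ whose restriction to $D$ is $id_D$ must be of the form $H_{id_D, k}$ for some $k\in C^\times$ satisfying $id_D(d)=N_{C/S_0}(k)d$; since $d\in D^\times$, this forces $N_{C/S_0}(k)=1$. Conversely, part (ii) of Theorem \ref{thm:aut2} confirms that each such $k$ yields an $S_0$-automorphism $H_{id_D, k}$ extending $id_D$. Injectivity of the assignment $k\mapsto H_{id_D, k}$ is the remark preceding the corollary, and since $\tau=id_D$ fixes every element of $C$, the composition formula gives $H_{id_D, k}\circ H_{id_D, l}=H_{id_D, kl}$, so the map is a group isomorphism onto $\{k\in C^\times\,|\,N_{C/S_0}(k)=1\}$.

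For (ii), the approach is to iterate $H_{\tau,\omega}$ and read off an explicit formula for the powers. Because $\omega\in S_0\subseteq {\rm Fix}(\sigma)\cap C$ and $\tau\in {\rm Aut}_{S_0}(D)$ fixes $S_0$, one has $\tau(\omega)=\omega$ and $\sigma^j(\omega)=\omega$ for every $j$. Using the composition formula this yields $H_{\tau,\omega}\circ H_{\tau,\omega}=H_{\tau^2,\omega^2}$, and by induction $H_{\tau,\omega}^n=H_{\tau^n,\omega^n}$ for all $n\ge 1$. From the explicit form in Theorem \ref{thm:aut2}(i), this iterate acts as
$$\sum_{i=0}^{m-1}x_it^i\;\mapsto\;\tau^n(x_0)+\sum_{i=1}^{m-1}\tau^n(x_i)\,\omega^{ni}\,t^i.$$
Demanding that this be the identity on $A$ forces $\tau^n=id_D$ (comparing constant terms for arbitrary $x_0\in D$) and $\omega^n=1$ (comparing the coefficient of $t$).

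Since $\tau$ has order $s$ and $\omega$ is an $m$th root of unity (so ${\rm ord}(\omega)$ divides $m$), the least $n\ge 1$ for which both conditions hold is ${\rm lcm}(s,{\rm ord}(\omega))$, which divides $ms$. Hence $|\langle H_{\tau,\omega}\rangle|\le ms$. Specializing to $\tau=id_D$ with $s=1$ gives $|\langle H_{id_D,\omega}\rangle|\le m$. If $\tau$ has infinite order, then $\tau^n\ne id_D$ for every $n\ge 1$, and since $H_{\tau,\omega}^n|_D=\tau^n$, no positive power of $H_{\tau,\omega}$ can equal $id_A$, so the cyclic subgroup is infinite. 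The only subtlety worth flagging is the bookkeeping of $\sigma^j$ acting on $\omega$ inside the products $\prod_{l=0}^{i-1}\sigma^l(\omega)$; the hypothesis $\omega\in S_0$ is exactly what collapses each such product to $\omega^i$, so this is more a careful observation than a genuine obstacle.
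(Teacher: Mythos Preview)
Your proof is correct and is exactly the argument the paper has in mind: the corollary is stated there without proof, as an immediate consequence of Theorem~\ref{thm:aut2} together with the remark just before it that $H_{\tau,k}=H_{\rho,l}$ iff $\tau=\rho$ and $k=l$, and that the $H_{\tau,k}$ compose in the obvious way. Your careful tracking of $\tau(\omega)=\omega$ and $\sigma^j(\omega)=\omega$ (because $\omega\in S_0$) is precisely the ingredient that makes the composition formula collapse to $H_{\tau,\omega}^n=H_{\tau^n,\omega^n}$, and your identification of the order as $\operatorname{lcm}(s,\operatorname{ord}(\omega))\mid ms$ is the right justification for the ``at most $ms$'' bound.
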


As a consequence of Theorem \ref{thm:aut2} and Corollary \ref{cor:7} we obtain all automorphisms of a nonassociative cyclic Azumaya algebra:

\begin{theorem} \label{thm:aut3}
Let $A=(S/S_0,\sigma,d)$ be a nonassociative cyclic Azumaya algebra.
\\ (i) Let $H \in {\rm Aut}_{S_0}(A)$. Then $H=H_{\tau,k}$ with
$$H_{\tau,k}( \sum_{i=0}^{m-1} a_i t^i )=\tau(a_0)+ \sum_{i=1}^{m-1}\tau(a_i) \big( \prod_{l=0}^{i-1} \sigma^l(k) \big) t^i,$$
for some $\tau\in {\rm Aut}_{S_0}(S)$ and some $k \in S^\times$  such that
 $\tau(d) = N_{S/S_0}(k) d$. All maps $H_{\tau,k}$ where $\tau \in {\rm Aut}_{S_0}(S)$ and where $k \in S^\times$  such that
 $\tau(d) = N_{S/S_0}(k) d$, are automorphisms of $A$.
\\ (ii) For all $k\in S^\times$ such that $N_{S/S_0}(k) = 1$, $id_S$ can be extended to an automorphism
$$H_{id,k}( \sum_{i=0}^{m-1}a_i t^i)= a_0+ \sum_{i=1}^{m-1} a_i \big( \prod_{l=0}^{i-1} \sigma^l(k) \big) t^i$$
in ${\rm Aut}_{S_0}(A)$.
\\ (iii) The subgroup of $S_0$-automorphisms of  $A$ extending $id_{S}$ is isomorphic to
$$\{ k \in S^\times\,|\, N_{S/S_0}(k) = 1 \}.$$
\end{theorem}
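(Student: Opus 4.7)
The plan is to deduce Theorem \ref{thm:aut3} as the specialization of Theorem \ref{thm:aut2} and Corollary \ref{cor:7} to the case $D = S$, where $S$ is commutative. Note that a commutative ring is trivially an Azumaya algebra over itself (of constant rank $1$), so $(S/S_0,\sigma,d)$ is indeed a nonassociative generalized cyclic Azumaya algebra in the sense of Section \ref{sec:nonass}, with center $D = C = S$. Thus $S_0 = C \cap \mathrm{Fix}(\sigma) = \mathrm{Fix}(\sigma)$, and the general theory applies.

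For part (i), I would invoke Theorem \ref{thm:aut2}(i) directly: every $H \in \mathrm{Aut}_{S_0}(A)$ has the form $H_{\tau,k}$ for some $\tau \in \mathrm{Aut}_{S_0}(S)$ commuting with $\sigma$ and some $k \in S^\times$ with $\tau(d) = N_{S/S_0}(k)d$. The only point requiring comment is that, in the commutative setting, the hypothesis ``$\tau$ commutes with $\sigma$'' is automatic: since $S/S_0$ is a cyclic Galois extension with Galois group $\langle \sigma \rangle$, any $\tau \in \mathrm{Aut}_{S_0}(S)$ lies in $\langle \sigma \rangle$, hence is a power of $\sigma$ and commutes with $\sigma$. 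Consequently the statement in (i) can be given without any commutativity hypothesis on $\tau$.

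For part (ii), I would apply Theorem \ref{thm:aut2}(ii) (equivalently the $\tau = \mathrm{id}_S$ case of (i) with $N_{S/S_0}(k) = 1$), which yields exactly the claimed formula for $H_{\mathrm{id},k}$ and confirms that it is an $S_0$-algebra automorphism of $A$.

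For part (iii), I would specialize Corollary \ref{cor:7}(i) to $D = S$: the subgroup of $S_0$-automorphisms of $A$ extending $\mathrm{id}_S$ is the image of the map $k \mapsto H_{\mathrm{id},k}$ on $\{k \in S^\times \mid N_{S/S_0}(k) = 1\}$, and this map is a group homomorphism by the composition rule $H_{\tau,k}\circ H_{\rho,\ell} = H_{\tau\circ\rho, k\ell}$ noted after Theorem \ref{thm:aut2}, while injectivity follows from $H_{\mathrm{id},k} = H_{\mathrm{id},\ell} \Leftrightarrow k = \ell$ (same remark). No step is truly an obstacle here; the only subtlety worth flagging is the automatic commutativity $\tau\sigma = \sigma\tau$ for $\tau \in \mathrm{Aut}_{S_0}(S)$, which removes a hypothesis present in the more general Theorem \ref{thm:aut2}.
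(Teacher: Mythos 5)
Your proposal is correct and follows essentially the same route as the paper: the paper likewise obtains (i) by specializing the general results to $D=C=S$ (citing Theorem \ref{thm:automorphism_of_Sf_field_case} for the forward direction and Theorem \ref{thm:aut2} for the converse, noting that every $\tau\in{\rm Aut}_{S_0}(S)$ commutes with $\sigma$), and derives (ii) and (iii) from (i) together with Corollary \ref{cor:7}. The commutativity observation you flag is exactly the point the paper's proof emphasizes.
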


\begin{proof}
(i)  Let $S/S_0$ be a cyclic Galois ring extension with $\mathrm{Gal}(S/S_0) = \langle \sigma  \rangle$ of order $m$, $\sigma$ an automorphism of $S$. Then for all $\tau\in \mathrm{Gal}(S/S_0) $ and $k \in S^{\times}$  such that
\begin{equation}
\tau(d) = \Big( \prod_{l=0}^{m-1}\sigma^l(k) \Big) d,
\end{equation}
 the map $H_{\tau , k}:(S/S_0,\sigma,d)\longrightarrow (S/S_0,\sigma,d)$,
$$H_{\tau , k}(\sum_{i=0}^{m-1} x_i t^i )=\sum_{i=0}^{m-1} \tau(x_i) (kt)^i$$
 is an automorphism of the nonassociative cyclic Azumaya algebra $(S/S_0,\sigma,d)$ that extends $\tau$ (choose $D=C$ in Theorem \ref{thm:automorphism_of_Sf_field_case}). Since all $\tau\in {\rm Aut}_{S_0}(S)$ commute with $\sigma$, we obtain all automorphisms this way by Theorem \ref{thm:aut2}.
\\ (ii) and (iii) follow from (i) and Corollary \ref{cor:7}.
\end{proof}

 Note that if $S_0$ contains an $m$th root of unity $\omega$, and $\tau$ has finite order $s$, then the cyclic subgroup $\langle H_{\tau,\omega}\rangle$ of ${\rm Aut}_{S_0}((S/S_0,\sigma,d))$,  again has order at most $ms$.

If  $S_0$ has no non-trivial $m$th root of unity, we obtain:

\begin{theorem}\label{thm:inner}
Suppose $S_0$ has no non-trivial $m$th root of unity.
Let $A = (S/S_0,\sigma,d)$ be a nonassociative cyclic algebra of degree $m$ where $d \in S^{\times}$ is not contained
in any proper subring of $S$. 
Then every $S$-automorphism of $A$ leaves $S_0$ fixed and
$$\mathrm{Aut}_{S_0}(A) \cong \mathrm{ker}(N_{S/S_0}).$$
\end{theorem}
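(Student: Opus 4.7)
The plan is to combine Theorem \ref{thm:aut3} with the two extra hypotheses---that $S_0$ has no non-trivial $m$th root of unity and that $d$ lies in no proper subring of $S$---to force every $S_0$-automorphism of $A$ to extend $\mathrm{id}_S$.

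First I invoke Theorem \ref{thm:aut3}(i): every $H\in\mathrm{Aut}_{S_0}(A)$ has the form $H_{\tau,k}$ for some $\tau\in\mathrm{Aut}_{S_0}(S)$ and some $k\in S^\times$ with $\tau(d)=N_{S/S_0}(k)\,d$. Since $S/S_0$ is cyclic Galois with $\mathrm{Gal}(S/S_0)=\langle\sigma\rangle$ of order $m$, I may write $\tau=\sigma^j$ for some $0\le j\le m-1$. Setting $\omega:=N_{S/S_0}(k)$, the norm is $\sigma$-invariant so $\omega\in S_0^\times$, and the constraint reads $\sigma^j(d)=\omega d$.

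I then iterate the relation: applying $\sigma^j$ a total of $m$ times and using $\sigma^{mj}=\mathrm{id}_S$ together with $\sigma^j(\omega)=\omega$, I obtain $d=\omega^m d$ in $S^\times$, hence $\omega^m=1$. The hypothesis on roots of unity forces $\omega=1$, so $\sigma^j(d)=d$, i.e.\ $d\in\mathrm{Fix}(\sigma^j)$. The Galois correspondence recalled in Section \ref{sec:prel} identifies $\mathrm{Fix}(\sigma^j)=S^{\langle\sigma^j\rangle}$ as a proper $S_0$-subalgebra of $S$ whenever $\sigma^j\ne\mathrm{id}_S$; since by hypothesis $d$ lies in no proper subring of $S$, I must have $\sigma^j=\mathrm{id}_S$, i.e.\ $\tau=\mathrm{id}_S$.

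Finally, I conclude using Theorem \ref{thm:aut3}(iii): every $H\in\mathrm{Aut}_{S_0}(A)$ is of the form $H_{\mathrm{id}_S,k}$ with $N_{S/S_0}(k)=1$, and the assignment $H_{\mathrm{id}_S,k}\mapsto k$ supplies the desired isomorphism $\mathrm{Aut}_{S_0}(A)\cong\ker(N_{S/S_0})$. The clause that every $S$-automorphism fixes $S_0$ is immediate from the inclusion $S_0\subset S$. The only nontrivial ingredient in the argument is the properness of $\mathrm{Fix}(\sigma^j)$ for $\sigma^j\ne\mathrm{id}_S$, which is a direct consequence of the injectivity in the one-one correspondence between subgroups of $\mathrm{Gal}(S/S_0)$ and $G$-strong intermediate subrings.
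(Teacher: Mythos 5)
Your proof is correct and follows essentially the same route as the paper: reduce to $H=H_{\sigma^j,k}$ via Theorem \ref{thm:aut3}, iterate the compatibility relation $\sigma^j(d)=N_{S/S_0}(k)\,d$, and play the two hypotheses (no non-trivial $m$th roots of unity in $S_0$; $d$ in no proper subring, so $d\notin\mathrm{Fix}(\sigma^j)$ for $\sigma^j\neq\mathrm{id}$) against each other. The only difference is cosmetic: you first force $N_{S/S_0}(k)=1$ and then contradict the subring hypothesis, whereas the paper computes the powers $H_{\sigma^j,k}^i$ and derives the contradiction by exhibiting $N_{S/S_0}(k)$ as a non-trivial $m$th root of unity; your direct iteration of $\sigma^j$ on the single relation is, if anything, slightly cleaner.
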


\begin{proof}
Every automorphism of $A$ has the form $H_{id,k}$:
suppose that there exist $j \in \{ 1, \ldots, m-1 \}$ and $k \in S^{\times}$ such that
$H_{\sigma^j,k} \in \mathrm{Aut}_{S_0}(A)$. This implies $H_{\sigma^j,k}^2 = H_{\sigma^j,k} \circ H_{\sigma^j,k} \in
\mathrm{Aut}_{S_0}(A)$  and
\begin{equation} \label{eqn:Aut(S_f) F does not contain primitive root of unity 1}
\begin{split}
H_{\sigma^j,k}^2 & \Big( \sum_{i=0}^{m-1} x_i t^i \Big) = \sigma^{2j}(x_0)  + \sum_{i=1}^{m-1} \sigma^{2j}(x_i) \Big( \prod_{q=0}^{i-1} \sigma^{j+q}(k) \sigma^q(k) \Big) t^i.
\end{split}
\end{equation}
Now $H_{\sigma^j,k}^2$ must have the form $H_{\sigma^{2j},l}$ for some $l \in S^{\times}$. Comparing
\eqref{eqn:neccessaryII} and \eqref{eqn:Aut(S_f) F does not contain primitive root of unity 1} yields
$l = k \sigma^j(k)$. Similarly, $H_{\sigma^j,k}^3 = H_{\sigma^{3j},s} \in \mathrm{Aut}_{S_0}(A)$ where
$s = k \sigma^j(k) \sigma^{2j}(k)$.
Continuing in this manner  the automorphisms $H_{\sigma^j,k}, H_{\sigma^{2j},l}, H_{\sigma^{3j},s},
\ldots$ all satisfy  (\ref{eqn:neccessary}) implying that
\begin{align} \label{eqn:Aut(S_f) F does not contain primitive root of unity 2}
\begin{split}
\sigma^j(d) &= N_{S/S_0}(k) d, \\
\sigma^{2j}(d) &= N_{S/S_0}(k \sigma^j(k)) d = N_{S/S_0}(k)^2 d, \\
\vdots & \qquad \qquad \vdots \\
d = \sigma^{n j}(d) &= N_{S/S_0}(k)^{n} d,
\end{split}
\end{align}
where $n = m/\mathrm{gcd}(j,m)$ is the order of $\sigma^j$.
Note that $\sigma^{ij}(d) \neq d$ for all $i \in \{ 1, \ldots, n-1 \}$ since $d$ is not contained in any proper
subring of $S$. Therefore $N_{S/S_0}(k)^{n} = 1$ and $N_{S/S_0}(k)^i \neq 1$ for all $i \in \{1, \ldots, n - 1 \}$
by \eqref{eqn:Aut(S_f) F does not contain primitive root of unity 2}, i.e. $N_{S/S_0}(k)$ is a primitive $n$th
 root of unity, thus also an $m$th root of unity, a contradiction. This proves the assertion.
\end{proof}


 \subsection{Associative generalized cyclic Azumaya algebras}


 In the associative setting, the previous results show that all automorphisms of a generalized cyclic Azumaya algebra $(D,\sigma,d)$ are induced by automorphisms of $D[t;\sigma]$:

\begin{corollary}  \label{cor:aut2}\label{cor:7new}
Let  $A=(D,\sigma,d)$ be a generalized cyclic  Azumaya algebra, i.e. $d\in S_0^\times$.
\\ (i) Every $\tau\in {\rm Aut}_{S_0}(D)$ that commutes with $\sigma$ can be extended to an automorphism
$$H_{\tau,k}( \sum_{i=0}^{m-1} a_i t^i )=\sum_{i=0}^{m-1} \tau(a_i) (kt)^i =\tau(a_0)+ \sum_{i=1}^{m-1}\tau(a_i) \big( \prod_{l=0}^{i-1} \sigma^l(k) \big) t^i$$
in $ {\rm Aut}_{S_0}(A)$
for some $k \in C^\times$ such that $ N_{C/S_0}(k)=1.$
All maps $H_{\tau,k}$ where $\tau \in {\rm Aut}_{S_0}(D)$ commutes with $\sigma$ and where $k \in C^\times$  such that
$ N_{C/S_0}(k)=1$ are automorphisms of $A$.
\\ (ii)  The subgroup of $S_0$-automorphisms extending some fixed $\tau\in {\rm Aut}_{S_0}(D)$ is isomorphic to
$$\{ k \in C^\times\,|\, N_{C/S_0}(k) = 1 \}.$$
 (iii) Suppose that $S_0$ contains a primitive $m$th root of unity $\omega$. If  $\tau$ has finite order $s$, then the cyclic subgroup $\langle H_{\tau,\omega}\rangle$ of ${\rm Aut}_{S_0}(A)$  generated by $H_{\tau,\omega}$ has order $ms$.
(If $\tau$ has infinite order then this subgroup has infinite order.)
Furthermore, $\langle H_{id,\omega}\rangle$
is a cyclic subgroup of ${\rm Aut}_{S_0}((D,\sigma,d))$ of order $m$.
\\ (iv) $\{\tau \in {\rm Aut}_{S_0}(D)\,|\, \tau\circ\sigma=\sigma\circ \tau \}$
 is isomorphic to a subgroup of ${\rm Aut}_{S_0}(A)$.
 \end{corollary}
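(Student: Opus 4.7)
The plan is to deduce all four parts by specializing Theorem \ref{thm:aut2} to the associative case $d\in S_0^\times$. The key observation is that any $\tau\in {\rm Aut}_{S_0}(D)$ fixes $S_0$ pointwise, so $\tau(d)=d$ whenever $d\in S_0^\times$. Consequently, the condition $\tau(d)=N_{C/S_0}(k)\,d$ appearing in Theorem \ref{thm:aut2} collapses to $N_{C/S_0}(k)=1$. This yields the formula and the extension statement in (i) at once, together with the fact that every such pair $(\tau,k)$ with $\tau\sigma=\sigma\tau$ actually produces an $S_0$-automorphism of $A$. For (ii), I would fix $\tau$ and consider the assignment $k\mapsto H_{\tau,k}$: by the uniqueness statement preceding the corollary ($H_{\tau,k}=H_{\tau,l}\Longleftrightarrow k=l$) this is injective, and Theorem \ref{thm:aut2} guarantees surjectivity onto the set of extensions of $\tau$, so the set of extensions is indexed exactly by $\ker N_{C/S_0}$.

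For (iii), the plan is to compute $H_{\tau,\omega}^j$ directly. Because $\omega\in S_0\subset {\rm Fix}(\sigma)$, we have $\tau(\omega)=\omega$, so the general composition rule for the maps $H_{\cdot,\cdot}$ simplifies and an easy induction gives $H_{\tau,\omega}^j=H_{\tau^j,\omega^j}$. By the bijectivity observation in part (ii), this equals the identity precisely when $\tau^j=id_D$ and $\omega^j=1$, and since $\omega$ is a primitive $m$th root of unity while $\tau$ has order $s$, one reads off the claimed order of the cyclic subgroup $\langle H_{\tau,\omega}\rangle$. The case $\tau=id_D$ drops out immediately as a special instance. Finally, for (iv), one checks that $\tau\mapsto H_{\tau,1}$ from $\{\tau\in {\rm Aut}_{S_0}(D)\,|\,\tau\sigma=\sigma\tau\}$ into ${\rm Aut}_{S_0}(A)$ is an injective group homomorphism: the composition formula with $k=l=1$ gives $H_{\tau,1}\circ H_{\rho,1}=H_{\tau\rho,1}$, while injectivity follows from $H_{\tau,1}|_D=\tau$.

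The one subtlety I anticipate is the composition rule for the $H_{\tau,k}$: a direct calculation shows it reads $H_{\tau,k}\circ H_{\rho,l}=H_{\tau\rho,\tau(l)k}$ rather than simply $H_{\tau\rho,kl}$, the extra twist by $\tau(l)$ arising because $\tau|_C$ may be nontrivial. In every place where the rule is invoked above, however, the relevant second-slot scalars lie in $S_0$ (namely $\omega$ in (iii) and $1$ in (iv)) and are therefore fixed by $\tau$, so no obstruction arises and the simplified form $H_{\tau\rho,kl}$ applies.
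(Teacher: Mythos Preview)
Your approach is essentially the paper's: reduce everything to Theorem~\ref{thm:aut2} and Corollary~\ref{cor:7} by observing that $d\in S_0^\times$ forces $\tau(d)=d$, so the compatibility condition $\tau(d)=N_{C/S_0}(k)\,d$ collapses to $N_{C/S_0}(k)=1$. The paper's written proof is in fact a single sentence addressing only (iv), so your fuller treatment --- in particular the correct identification of the composition law as $H_{\tau,k}\circ H_{\rho,l}=H_{\tau\rho,\tau(l)k}$ together with the remark that the twist by $\tau$ is harmless in (iii) and (iv) because the scalars $\omega$ and $1$ lie in $S_0$ --- is a genuine improvement over what the paper records.

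One point in (iii) does not quite close, however. Your computation gives $H_{\tau,\omega}^j=H_{\tau^j,\omega^j}$, hence $H_{\tau,\omega}^j=\mathrm{id}$ if and only if $s\mid j$ and $m\mid j$. That yields order $\mathrm{lcm}(m,s)$, not $ms$; the two coincide only when $\gcd(m,s)=1$. So the phrase ``one reads off the claimed order'' overshoots: what your argument actually delivers is the bound ``at most $ms$'' of Corollary~\ref{cor:7}(ii), not the exact value $ms$ asserted here. This is arguably an imprecision in the corollary's statement rather than a flaw in your reasoning, but you should flag the discrepancy rather than claim the bound is attained.
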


\begin{proof}
It remains to show (iv), which holds since we have the canonical extension $H_{\tau,1}$ for each $\tau\in {\rm Aut}_{S_0}(A)$ that commutes with $\sigma$.
\end{proof}

\begin{corollary} \label{cor:aut3}
Let $A=(S/S_0,\sigma,d)$, $d\in S_0^\times$, be a cyclic Azumaya algebra. Then every $\tau\in {\rm Aut}_{S_0}(S)$ can be extended to an automorphism
$$H_{\tau,k}( \sum_{i=0}^{m-1} a_i t^i )=\tau(a_0)+ \sum_{i=1}^{m-1}\tau(a_i) \big( \prod_{l=0}^{i-1} \sigma^l(k) \big) t^i$$
in ${\rm Aut}_{S_0}(A)$
for some $k \in S^\times$ such that $ N_{S/S_0}(k)=1.$ All maps $H_{\tau,k}$ where $\tau \in {\rm Aut}_{S_0}(S)$ and where $k \in S^\times$  such that $ N_{S/S_0}(k) =1$ are algebra automorphisms of $A$.
\\ In particular, $ {\rm Aut}_{S_0}(S)=\langle\sigma \rangle\subset {\rm Aut}_{S_0}(A)$.
\end{corollary}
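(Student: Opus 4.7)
The plan is to deduce Corollary \ref{cor:aut3} as the commutative specialization $D = C = S$ of Corollary \ref{cor:aut2}. The only hypothesis in Corollary \ref{cor:aut2} that does not transfer verbatim is the commutation relation $\tau \circ \sigma = \sigma \circ \tau$; so the first step is to verify that this is automatic in the present setting. This is immediate because $S/S_0$ is assumed to be a cyclic Galois extension with Galois group $\langle \sigma \rangle$, so every $\tau \in \mathrm{Aut}_{S_0}(S)$ lies in the abelian group $\langle \sigma \rangle$ and therefore commutes with $\sigma$. With this in hand, Corollary \ref{cor:aut2}(i) applied to $D = S$ yields directly that, for any $k \in S^{\times}$ with $N_{S/S_0}(k) = 1$, the map
$$
H_{\tau,k}\Big(\sum_{i=0}^{m-1} a_i t^i\Big) = \tau(a_0) + \sum_{i=1}^{m-1} \tau(a_i) \Big( \prod_{l=0}^{i-1}\sigma^l(k) \Big) t^i
$$
is a well-defined $S_0$-algebra automorphism of $A$, and that all such maps are automorphisms.

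For the final assertion, I would first identify $\mathrm{Aut}_{S_0}(S)$ explicitly: since $S/S_0$ is cyclic Galois with group $\langle \sigma \rangle$ of order $m$, any $S_0$-algebra automorphism of $S$ is an element of the Galois group, so $\mathrm{Aut}_{S_0}(S) = \langle \sigma \rangle$. To realize this group inside $\mathrm{Aut}_{S_0}(A)$, I would use the canonical extension $\tau \mapsto H_{\tau,1}$ given by the first part (with $k = 1$, for which $N_{S/S_0}(1) = 1$ and the required relation $\tau(d) = N_{S/S_0}(1)\, d = d$ holds trivially since $d \in S_0^{\times}$ is fixed by every $\tau \in \mathrm{Gal}(S/S_0)$). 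The general composition rule $H_{\tau,1} \circ H_{\rho,1} = H_{\tau \circ \rho,\,1}$, noted just before Corollary \ref{cor:7}, shows that this assignment is a group homomorphism, and injectivity is obvious from the formula (the restriction to $S$ recovers $\tau$). Hence $\mathrm{Aut}_{S_0}(S) = \langle \sigma \rangle$ embeds as a subgroup of $\mathrm{Aut}_{S_0}(A)$.

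There is no real obstacle here; the entire statement is a direct corollary of the already-established Corollary \ref{cor:aut2} once one observes that the commutation condition between $\tau$ and $\sigma$ is automatically satisfied in the Galois setting, and that $d \in S_0^{\times}$ being fixed by every Galois automorphism makes the condition $\tau(d) = N_{S/S_0}(k)\, d$ collapse to $N_{S/S_0}(k) = 1$. The only mild subtlety worth spelling out is why $\mathrm{Aut}_{S_0}(S)$ coincides with the Galois group and not some larger automorphism group, but this is exactly the defining property of a Galois extension.
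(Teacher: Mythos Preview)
Your proposal is correct and mirrors exactly what the paper does: Corollary \ref{cor:aut3} is stated without proof as the immediate specialization $D=C=S$ of Corollary \ref{cor:aut2}, and your observation that the commutation hypothesis $\tau\circ\sigma=\sigma\circ\tau$ is automatic because ${\rm Aut}_{S_0}(S)=\langle\sigma\rangle$ is abelian is precisely the point that makes the specialization go through. The embedding $\langle\sigma\rangle\hookrightarrow{\rm Aut}_{S_0}(A)$ via $\tau\mapsto H_{\tau,1}$ is likewise the content of Corollary \ref{cor:aut2}(iv) in this setting.
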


These are all possible automorphisms. So there is a bijection between the set of automorphisms of $A=(S/S_0,\sigma,d)$ and the set
$$\{(\tau,k)\,|\, \tau\in {\rm Aut}_{S_0}(A),\, k\in S^\times \text{ with }N_{S/S_0}(k)=1 \}$$
and for each $\tau \in {\rm Aut}_{S_0}(A)$, there are either infinitely many (if the set of norm one elements in $S$ is infinite) or $|\{k\in S^\times \,|\, N_{S/S_0}(k)=1\}|$ different possible extensions.

\section{Inner automorphisms}\label{sec:inner}

We now consider the inner automorphisms of nonassociative generalized cyclic Azumaya algebras.

\begin{theorem} \label{prop:inner}
 (a) Let $A=(D,\sigma,d)$ be a nonassociative generalized cyclic Azumaya algebra. Let $k\in C$ such that there is $c\in C^\times$ with $k=c^{-1}\sigma(c)$.
 \\ (i) $H_{\tau,k}=G_c\circ H_{\tau, 1}$ (note that $H_{\tau, 1}$ is not necessarily an automorphism here, just a map).
 \\ (ii) The automorphism $H_{id,k}$ of $A$ is the inner automorphism
$$G_c(\sum_{i=0}^{m-1}a_it^i)=(c^{-1}\sum_{i=0}^{m-1}a_it^i) c.$$
\\ (b) Let $A=(S/S_0,\sigma,d)$ be a nonassociative  cyclic Azumaya algebra. Let $k\in S$ such that there is $c\in S^\times$ with $k=c^{-1}\sigma(c)$.
 \\ (i) $H_{\tau,k}=G_c\circ H_{\tau, 1}$ (note that $H_{\tau, 1}$ again is not necessarily an automorphism here, just a map).
 \\ (ii) The automorphism $H_{id,k}$ of $A$ is the inner automorphism
$$G_c(\sum_{i=0}^{m-1}a_it^i)=(c^{-1}\sum_{i=0}^{m-1}a_it^i) c.$$
\end{theorem}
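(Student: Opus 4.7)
The plan is to reduce everything to part (a)(ii), prove that by direct computation, then derive (a)(i) by composition, and obtain (b) as a specialization. The central observation is that for $c\in C^\times$ the conjugation $t\mapsto c^{-1}tc$ picks up exactly the factor $c^{-1}\sigma(c)=k$, and iterating this yields a telescoping product matching the formula for $H_{id,k}$ given in Theorem \ref{thm:aut2}(ii).

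First I would check that $G_c$ is even well-defined as an inner automorphism. From Theorem \ref{thm:maingenAzumaya}(ii), $c\in C^\times\subset D\subset {\rm Nuc}_l(A)\cap {\rm Nuc}_m(A)$; Theorem \ref{thm:maingenAzumaya}(iii) shows that $C$ also lies in ${\rm Nuc}_r(A)$ (it is the first summand of the right-nucleus description). So $c\in{\rm Nuc}(A)^\times$, and Proposition \ref{prop:inner_Wene}(i) supplies that $G_c(x)=(c^{-1}x)c = c^{-1}(xc)$ is an inner automorphism, with parenthesization irrelevant because $c$ is nuclear.

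Next comes the computation for (a)(ii). Iterating the skew polynomial relation $tc=\sigma(c)t$ gives $t^ic=\sigma^i(c)t^i$, and since $c^{-1}\in C$ commutes with every element of $D$, I obtain
\[
G_c\Bigl(\sum_{i=0}^{m-1}a_it^i\Bigr)=\sum_{i=0}^{m-1}a_i\,c^{-1}\sigma^i(c)\,t^i.
\]
The identity $c^{-1}\sigma^i(c)=\prod_{l=0}^{i-1}\sigma^l(c)^{-1}\sigma^{l+1}(c)=\prod_{l=0}^{i-1}\sigma^l(k)$ then telescopes this into the formula for $H_{id,k}$ from Theorem \ref{thm:aut2}(ii). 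As a sanity check, the norm condition is automatic: $N_{C/S_0}(k)=c^{-1}\sigma^m(c)=1$, since $\sigma|_C$ has order $m$, so $H_{id,k}$ is indeed an automorphism.

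For (a)(i), I would simply post-compose: $H_{\tau,1}$ sends $\sum a_it^i$ to $\sum\tau(a_i)t^i$, and applying $G_c$ afterwards reruns the telescoping computation above with $\tau(a_i)$ in place of $a_i$, producing $H_{\tau,k}$. This step needs nothing about whether $H_{\tau,1}$ is itself multiplicative, so the statement holds as a bare equality of maps, exactly as the theorem warns. Finally, part (b) is obtained by specializing $D=S$, $C=S$; both the nucleus verification and the telescoping calculation transfer verbatim. The only substantive hurdle is the preliminary check that $c$ is nuclear in the nonassociative algebra, which is already resolved by the earlier structural Theorem \ref{thm:maingenAzumaya}(iii); after that, the proof is essentially a one-line skew polynomial manipulation.
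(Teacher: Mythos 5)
Your proposal is correct and follows essentially the same route as the paper: the identity $\prod_{l=0}^{i-1}\sigma^l(k)=c^{-1}\sigma^i(c)$ combined with $t^ic=\sigma^i(c)t^i$ and the centrality of $c$ is exactly the paper's computation (the paper does the calculation for general $\tau$ to get (i) first and reads off (ii) as the special case $\tau=id$, whereas you reverse the order, but the telescoping argument is identical). Your preliminary verification that $c\in{\rm Nuc}(A)^\times$ via Theorem \ref{thm:maingenAzumaya} so that $G_c$ is a legitimate inner automorphism is a worthwhile detail the paper leaves implicit.
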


\begin{proof}
(a)
(i) For $k\in C$ such that $k=c^{-1}\sigma(c)$ for some $c\in C^\times$, we have
$$k\sigma(k)\cdots \sigma^{i-1}(k)=c\sigma^i(c),\quad i=1\dots,m-1,$$
 hence
$$G_c(\sum_{i=0}^{m-1}\tau(a_i)t^i)=(c^{-1}\sum_{i=0}^{m-1} \tau(a_i) t^i) c=\sum_{i=0}^{m-1}\tau(a_i) c^{-1}\sigma^i(c) t^i
=H_{\tau,k}(\sum_{i=0}^{m-1}a_it^i)$$
with the last equality holding because of $\Pi_{l=0}^{i-1}\sigma^l(c^{-1}\sigma(c))=c^{-1}\sigma^i(c).$
\\ (ii) follows from (i) and (b) from (a).
\end{proof}

 A similar result to Theorem \ref{prop:inner} (a) (i) was proved for automorphisms of (associative) $G$-Azumaya algebras over a connected commutative ring $R$ with a primitive $s$th root of unity in \cite{MB} when $G$ is a finite abelian group of finite order $n$ and exponent $s$, provided that ${\rm Pic}_s(R)$ is trivial and $n$ a unit in $R$.

Let us look at the associative setting:

\begin{theorem} \label{prop:gencyclicmain}
(a) Let $A=(D,\sigma,d)$ be a generalized cyclic Azumaya algebra.
\\ (i) Let $k\in C^\times$ such that there is $c\in C^\times$ with $k=c^{-1}\sigma(c)$. Then $H_{\tau,k}=G_c\circ H_{\tau, 1}$.
\\ (ii)  $H_{\sigma,1}=G_{t^{-1}}$ is an inner automorphism. Moreover,
$$\{G_{ct^{-j}}\,|\, c\in C^\times, 0\leq i\leq m-1\}$$
is a subgroup of ${\rm Aut}_{S_0}(A)$ of inner automorphisms.
\\ (b) Let $A=(S/S_0,\sigma,d)$ be a cyclic Azumaya algebra.
\\ (i) Let $k\in S^\times$ such that there is $c\in S^\times$ with $k=c^{-1}\sigma(c)$. Then $H_{\tau,k}=G_c\circ H_{\tau, 1}$.
\\ (ii) $\{G_{ct^{-j}}\,|\, c\in S^\times, 0\leq i\leq m-1\}$
is a subgroup of ${\rm Aut}_{S_0}(A)$ of inner automorphisms.
\end{theorem}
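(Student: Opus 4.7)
Part (a)(i) reduces immediately to Theorem \ref{prop:inner}(a)(i), which already proved $H_{\tau,k}=G_c\circ H_{\tau,1}$ as maps in any (possibly nonassociative) generalized cyclic Azumaya algebra. The point of restating it here is to note that in the associative setting $H_{\tau,1}$ is itself an element of ${\rm Aut}_{S_0}(A)$: by Corollary \ref{cor:aut2} this follows from $\tau$ commuting with $\sigma$ together with $\tau(d)=N_{C/S_0}(1)\,d=d$, and $\tau(d)=d$ holds automatically because $d\in S_0^\times$ is fixed by every $S_0$-algebra automorphism of $D$. Part (b)(i) is just the specialisation $D=C=S$.

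For the identification $H_{\sigma,1}=G_{t^{-1}}$ in (a)(ii), I would first check that $t$ is a unit in $A$: since $t^m=d\in S_0^\times$, the element $d^{-1}t^{m-1}$ is a two-sided inverse of $t$, so $G_{t^{-1}}(x)=txt^{-1}$ is a genuine inner automorphism in the sense of Section \ref{subsec:nonassalgs}. Using associativity and the defining relation $ta=\sigma(a)t$, a one-line computation gives $G_{t^{-1}}(a_it^i)=t(a_it^i)t^{-1}=\sigma(a_i)\,t\cdot t^i\cdot t^{-1}=\sigma(a_i)t^i$, which matches the formula for $H_{\sigma,1}$ read off from Theorem \ref{thm:aut2}. (That $H_{\sigma,1}$ is indeed an automorphism uses $\sigma(d)=d$.)

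For the subgroup claim my strategy is to lift the question up to $A^\times$. Set $U=\{ct^j\,|\,c\in C^\times,\ 0\le j\le m-1\}\subset A^\times$. Using $tc'=\sigma(c')t$ and the relation $t^m=d\in C^\times$, the product $(ct^j)(c't^{j'})=c\sigma^j(c')\,t^{j+j'}$ lands back in $U$ after absorbing any overflow via $t^{j+j'}=d\,t^{j+j'-m}$ when $j+j'\ge m$, and a similar calculation yields $(ct^j)^{-1}=\sigma^{-j}(c^{-1})d^{-1}t^{m-j}\in U$ for $j>0$ (with $c^{-1}\in U$ for $j=0$). Hence $U$ is a subgroup of $A^\times$. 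Since $A$ is associative, $a\mapsto G_a$ is an anti-homomorphism $A^\times\to{\rm Aut}_{S_0}(A)$ (indeed $G_aG_b=G_{ba}$), so the image of $U$ is a subgroup of ${\rm Aut}_{S_0}(A)$ consisting of inner automorphisms. Finally, the identity $t^{-j}=d^{-1}t^{m-j}$ shows that $\{ct^{-j}\,|\,0\le j\le m-1\}=\{ct^j\,|\,0\le j\le m-1\}$ as subsets of $A^\times$, so the image of $U$ coincides with the set in the statement; (b)(ii) is the specialisation $D=C=S$. The only mildly fiddly piece is the exponent-reduction bookkeeping in the closure argument, and I do not anticipate any serious obstacle.
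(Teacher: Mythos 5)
Your proposal is correct and follows essentially the same route as the paper: part (i) is quoted from Theorem \ref{prop:inner}, and the identity $H_{\sigma,1}=G_{t^{-1}}$ is obtained by the same computation with $t^{-1}=d^{-1}t^{m-1}$ (using $d\in S_0^\times$). The only difference is that where the paper dismisses the subgroup claim as trivial after writing $G_{ct^{-j}}=G_c\circ H_{\sigma^j,1}$, you verify it cleanly by showing $\{ct^j\}$ is a subgroup of $A^\times$ and pushing it through the anti-homomorphism $a\mapsto G_a$; this is a legitimate and slightly tidier way to fill in that step.
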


\begin{proof}
(a) (i) follows from Theorem \ref{prop:inner}.
\\ (ii) We know that $t^m=d$ in $A$. This means that $d^{-1}t^{m-1}$ is the inverse of $t$, since $d\in S_0$.
We have
$$G_{t^{-1}}(\sum_{i=0}^{m-1}a_it^i)=\sum_{i=0}^{m-1}t a_it^i(t^{m-1}d^{-1})=d^{-1}\sum_{i=0}^{m-1} \sigma(a_i)t^i d=H_{\sigma,1}(\sum_{i=0}^{m-1}a_it^i).$$
Thus also $H_{\sigma^j,1}=G_{t^{-j}}$ is an inner automorphism for all integers $j$, $0\leq j\leq m-1$, and so is
$G_c\circ H_{\sigma^j,1}=H_{\sigma^j,c^{-1}\sigma(c)}=G_{ct^{-j}}$ for all $c\in C^\times$. The rest of the assertion is trivial.
\\ All of (b) follows from (a).
\end{proof}

\begin{corollary} \label{thm:generalizedcycliccsa}
(i) Let  $(D,\sigma,d)$ be a generalized cyclic Azumaya algebra. If there exists an analogue of Hilbert's Theorem 90 for the ring extension $C/S_0$ (i.e., for every $k\in C$ with $N_{C/S_0}(k)=1$
there is $c\in C^\times$ such that $k=c^{-1}\sigma(c)$), then
$${\rm Aut}_{S_0}((D,\sigma,d))=\{H_{\tau,k}\,|\, \tau \in {\rm Aut}_{S_0}(D), \sigma\circ\tau=\tau\circ\sigma,\, k \in C^\times
\text{ such that } N_{C/S_0}(k)=1\}$$
$$=\{G_c\circ H_{\tau,1}\,|\, \tau \in {\rm Aut}_{S_0}(D), \sigma\circ\tau=\tau\circ\sigma,\, c \in C^\times \}.$$
 (ii) If there exists an analogue of Hilbert's Theorem 90 for the cyclic Galois ring extension $S/S_0$, then the cyclic Azumaya algebra $(S/S_0,\sigma,d)$ has the automorphism group
$${\rm Aut}_{S_0}((S/S_0,\sigma,d))=\{G_{ct^{-j}}\,|\, c\in S^\times, 0\leq i\leq m-1\}.$$
\end{corollary}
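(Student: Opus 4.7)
My plan is to derive this corollary by assembling three earlier results: the classification of automorphisms in Corollary \ref{cor:aut2}, the factorization $H_{\tau,k}=G_c\circ H_{\tau,1}$ from Theorem \ref{prop:inner}, and the explicit identification of $H_{\sigma^j,1}$ with the inner automorphism $G_{t^{-j}}$ from Theorem \ref{prop:gencyclicmain}. The Hilbert 90 hypothesis is what bridges the ``norm one'' description with the ``coboundary'' description; beyond that the argument is bookkeeping.

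For part (i), I would start by observing that, in the associative setting $d\in S_0^\times$, Corollary \ref{cor:aut2}(i) already states that every $S_0$-automorphism of $(D,\sigma,d)$ has the form $H_{\tau,k}$ with $\tau\in{\rm Aut}_{S_0}(D)$ commuting with $\sigma$ and $k\in C^\times$ satisfying $N_{C/S_0}(k)=1$, and conversely all such maps are automorphisms. (Note that the condition $\tau(d)=N_{C/S_0}(k)d$ from Theorem \ref{thm:aut2} collapses to $N_{C/S_0}(k)=1$ because $\tau$ fixes $S_0$ and $d\in S_0$.) This establishes the first equality.

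Next, invoking the assumed Hilbert 90 for $C/S_0$, for each such $k$ there is $c\in C^\times$ with $k=c^{-1}\sigma(c)$; Theorem \ref{prop:inner}(a)(i) then rewrites $H_{\tau,k}=G_c\circ H_{\tau,1}$, giving the inclusion $\subseteq$ in the second equality. For the reverse inclusion I must check that every $G_c\circ H_{\tau,1}$ with $\tau$ commuting with $\sigma$ and $c\in C^\times$ is actually an automorphism of $(D,\sigma,d)$: the map $H_{\tau,1}$ is an automorphism by Theorem \ref{thm:automorphism_of_Sf_field_case}(i) with $k=1$, since $N_{C/S_0}(1)=1$ and $\tau(d)=d$; and $G_c$ is inner because $c\in C^\times\subseteq D\subseteq{\rm Nuc}(A)$ in the associative setting. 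Alternatively one just sets $k:=c^{-1}\sigma(c)$ and notes that $N_{C/S_0}(k)=1$ by a telescoping product, so $G_c\circ H_{\tau,1}=H_{\tau,k}$ lies in the first set.

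For part (ii), the cyclic Azumaya case is the specialization $D=C=S$, in which ${\rm Aut}_{S_0}(S)=\langle\sigma\rangle$ by Galois theory, so every relevant $\tau$ is $\sigma^j$ for a unique $0\leq j\leq m-1$ and automatically commutes with $\sigma$. By Theorem \ref{prop:gencyclicmain}(a)(ii), $H_{\sigma^j,1}=G_{t^{-j}}$, and hence $H_{\sigma^j,k}=G_c\circ G_{t^{-j}}$ is an inner automorphism of the form $G_{c't^{-j}}$ for a suitable $c'\in S^\times$ (as $c$ ranges over $S^\times$ so does $c'=\sigma^{-j}(c)$, since $t^{-j}c=\sigma^{-j}(c)t^{-j}$). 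Combined with part (i) applied with $D=S$, this yields the claimed description of ${\rm Aut}_{S_0}((S/S_0,\sigma,d))$.

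The main obstacle is not really computational but conceptual: verifying that one has \emph{all} automorphisms, which depends crucially on Corollary \ref{cor:aut2}, and then making sure the Hilbert 90 step identifies the ``norm one'' parametrization with the ``inner automorphism'' parametrization in a bijective way. The telescoping identity $N_{C/S_0}(c^{-1}\sigma(c))=1$ closes the loop, so no genuine calculation remains beyond the verifications sketched.
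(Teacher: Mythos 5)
Your proposal is correct and follows essentially the same route as the paper: the first equality from the classification of automorphisms in the associative setting (Theorem \ref{thm:aut2} / Corollary \ref{cor:aut2}), the second equality via Hilbert 90 and the factorization $H_{\tau,k}=G_c\circ H_{\tau,1}$ of Theorem \ref{prop:inner}, and part (ii) by specializing to $\tau=\sigma^j$ and using $H_{\sigma^j,1}=G_{t^{-j}}$ from Theorem \ref{prop:gencyclicmain}. Your explicit check of the reverse inclusion (that every $G_c\circ H_{\tau,1}$ equals $H_{\tau,k}$ with $k=c^{-1}\sigma(c)$ of norm one) is a small detail the paper leaves implicit, but it does not change the argument.
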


\begin{proof}
(i) The first equality is clear by Theorem \ref{thm:aut2}.
 By assumption, we can write $H_{\tau,k}=G_c(H_{\tau,k})$ for $k=\sigma(c)c^{-1}$, that is
 $$H_{\tau,k}(\sum_{i=0}^{m-1}a_it^i)=(c^{-1}\sum_{i=0}^{m-1}\tau(a_i)t^i) c$$
 which implies the second equality.
 \\ (ii) We know that ${\rm Aut}_{S_0}((S/S_0,\sigma,d))=\{H_{\sigma^j,k}\,|\, 0\leq j \leq m-1,\, k \in S^\times
\text{ such that } N_{S/S_0}(k)=1\}$. Now we also have $H_{\sigma^j,l}=G_c\circ H_{\sigma^j,1}$ for $l=\sigma(c)c^{-1}$, and $H_{\sigma^j,1}= G_{t^{-1}}\circ G_{t^{-1}}\circ \cdots G_{t^{-1}}=G_{t^{-j}}$, thus $H_{\sigma^j,l}=G_c\circ G_{t^{-j}}=G_{ct^{-j}}$ is an inner automorphism. This implies the assertion.
\end{proof}

\subsection{The automorphisms of central simple algebras}

In this section let $D$ be a division algebra which is finite-dimensional over its center $F={\rm C}(D)$  and $\sigma\in {\rm Aut}(D)$ such that $\sigma|_{F}$ has finite order $m$  and fixed field $F_0={\rm Fix}(\sigma)\cap F$. Thus $F/F_0$ is automatically a cyclic Galois field extension of degree $m$ with $\mathrm{Gal}(F/F_0) = \langle \sigma |_{F} \rangle$. For all $d\in F_0^\times$, $A=(D,\sigma,d)$ is a generalized cyclic central simple algebra over $F_0$, cf. \cite{J96}.

As an immediate consequence of our results, the automorphisms of a generalized cyclic algebra over a field are induced by ring automorphisms of the ring $D[t;\sigma]$ used in their construction. More precisely, they can be described as the composition of an inner automorphism $G_c$, $c\in C^\times$, with the canonical extension $H_{\tau,1}$ of some $\tau \in {\rm Aut}_{F_0}(D)$ which commutes with $\sigma$:

\begin{corollary}  \label{cor:generalizedcycliccsa}
(i) Let $A=(D,\sigma,d)$ be a central simple algebra over $F_0$, then
$${\rm Aut}_{F_0}(A)=\{H_{\tau,k}\,|\, \tau \in {\rm Aut}_{F_0}(D), \sigma\circ\tau=\tau\circ\sigma,\, k \in F^\times
\text{ such that } N_{F/F_0}(k)=1\}$$
$$=\{G_c\circ H_{\tau,1}\,|\, \tau \in {\rm Aut}_{F_0}(D), \sigma\circ\tau=\tau\circ\sigma,\, c \in F^\times \}.$$
(ii) Let $A=(K/F,\sigma,d)$ be a cyclic algebra over $F$ of degree $m$. Then
$${\rm Aut}_{F}(A)=\{G_{ct^{-j}}\,|\,  c \in K^\times,  0\leq j \leq m-1 \}.$$
 \end{corollary}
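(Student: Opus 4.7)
The plan is to deduce both statements as direct consequences of Corollary \ref{thm:generalizedcycliccsa}; the only additional ingredient required is the classical Hilbert Theorem 90 for cyclic Galois field extensions.

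First I would verify that the hypotheses of Corollary \ref{thm:generalizedcycliccsa} hold in the central simple setting. The division algebra $D$, being finite-dimensional over its center $F$, is automatically an Azumaya algebra of constant rank over $F$. Since $\sigma|_F$ has finite order $m$ with fixed field $F_0$, the extension $F/F_0$ is a cyclic Galois field extension of degree $m$ with Galois group $\langle \sigma|_F \rangle$, so the framework of Section \ref{sec:gencyclic} applies in full. The classical Hilbert Theorem 90 guarantees that every $k \in F^\times$ with $N_{F/F_0}(k)=1$ can be written as $k = c^{-1}\sigma(c)$ for some $c \in F^\times$; this is exactly the ``analogue of Hilbert 90'' required by Corollary \ref{thm:generalizedcycliccsa}(i), and statement (i) follows at once.

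For (ii) I would specialize to $D=K$, a commutative field. Then $\mathrm{Aut}_F(K) = \mathrm{Gal}(K/F) = \langle \sigma \rangle$, so every $\tau \in \mathrm{Aut}_F(K)$ has the form $\tau = \sigma^j$ for some $0 \leq j \leq m-1$, and such $\tau$ commutes with $\sigma$ trivially. Invoking classical Hilbert 90 for $K/F$ together with Corollary \ref{thm:generalizedcycliccsa}(ii) produces the required description, with the identification $H_{\sigma^j,k} = G_c \circ G_{t^{-j}} = G_{ct^{-j}}$ for $k = c^{-1}\sigma(c)$ coming from Theorem \ref{prop:gencyclicmain}(a)(ii).

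The main obstacle is conceptual rather than technical: confirming that no tacit assumption of Section \ref{sec:gencyclic} fails when the commutative rings $C$, $S_0$ of the general theory are replaced by the fields $F$, $F_0$ (respectively $K$, $F$ in part (ii)), and that classical Hilbert Theorem 90 really is the analogue postulated in Corollary \ref{thm:generalizedcycliccsa}. Both verifications are standard, so the proof reduces essentially to citing the relevant results.
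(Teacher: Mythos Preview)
Your proposal is correct and matches the paper's approach exactly: the paper presents this corollary without an explicit proof, merely stating that it is ``an immediate consequence of our results'' --- namely Corollary~\ref{thm:generalizedcycliccsa} together with the classical Hilbert Theorem 90 for the cyclic field extensions $F/F_0$ and $K/F$. Your write-up simply makes explicit the verifications the paper leaves to the reader.
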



\end{document}